\DeclareMathOperator{\diam}{diam\,}
\DeclareMathOperator{\co}{co}
\renewcommand{\geq}{\geqslant}
\renewcommand{\leq}{\leqslant}
\newcommand{\supp}{\operatorname{supp}}
\newcommand{\spann}{\operatorname{span}}
\newtheorem{theorem}{Theorem}[section]
\newtheorem{lemma}[theorem]{Lemma}
\newtheorem{proposition}[theorem]{Proposition}
\newtheorem{corollary}[theorem]{Corollary}
\theoremstyle{definition}
\theoremstyle{remark}
\newtheorem{remark}[theorem]{Remark}
\numberwithin{equation}{section}
\def\fnote#1{\footnote}
\def\ignora#1{}
\def\n3#1{\left\vert  \! \left\vert \! \left\vert \, #1 \, \right\vert \!
  \right\vert \! \right\vert }
\begin{document}

\title{ Diameter, radius and Daugavet index of thickness of slices in Banach spaces }

\author{ Abraham Rueda Zoca }\address{Universidad de Granada, Facultad de Ciencias. Departamento de An\'{a}lisis Matem\'{a}tico, 18071-Granada
(Spain)} \email{ abrahamrueda@ugr.es}
\urladdr{\url{https://arzenglish.wordpress.com}}

\subjclass[2020]{46B20, 46B22}

\keywords {Daugavet property; thickness; radius; diameter; slice }
\dedicatory{Dedicated to Luis Rueda Mesa, \emph{in memoriam}}

\maketitle

\markboth{ABRAHAM RUEDA ZOCA}{DIAMETER, RADIUS AND DAUGAVET INDEX OF THICKNESS OF SLICES}

\begin{abstract}
We construct a Banach space $X$ with the r-BSP such that the infimum of the diameter of the slices of the unit ball is $1$, which gives negative answer to a 2006 question by Y. Ivakhno in an extreme way. This example is performed by considering modifications of the classical James-tree space $JT_\infty$ constructed on a tree with infinitely many branching points $T_\infty$. Moreover we prove that every Banach space with the Daugavet property admits, for every $\varepsilon>0$,  an equivalent renorming for which its Daugavet index of thickness is bigger than $2-\varepsilon$ and there are slices of the unit ball of diameter strictly smaller than $2$, which solves an open question from \cite{hllnr}.
\end{abstract}

\section{Introduction}

The study of geometrical and topological properties of slices, weakly open sets and convex combinations of slices has attracted the attention of many researchers in Functional Analysis because they have determined multiple properties of Banach spaces. In connection with the existence of such objects of small diameter we can highlight the characterisations of the \textit{Radon-Nikodym property (RNP), the (convex) point of continuity property ((C)PCP)} or the \textit{strong regularity}. In the opposite extreme, the study of big slices, weakly open sets and convex combinations of slices have been analysed in connection with diameter two properties, octahedrality of the norm and the Daugavet property.

Very recently, the following indices were defined in \cite{rueda18,hllnr} with the aim of measuring how far a Banach space $X$ is from having the Daugavet property:

\begin{equation*}
  \mathcal{T}^s(X) = \inf\left\lbrace r>0 \;\middle|\;
  \begin{tabular}{@{}l@{}}
    \text{ there exist $x\in S_X$ and a slice $S$ of $B_X$}\\ \text{ such that} $S \subset B(x,r)$
   \end{tabular}
  \right\rbrace,
\end{equation*}

\begin{equation*}
  \mathcal{T}(X) = \inf\left\lbrace r>0 \;\middle|\;
  \begin{tabular}{@{}l@{}}
    \text{ there exist $x\in S_X$ and a relatively weakly }\\ \text{ open $W$ in $B_X$ such that} $\emptyset\neq W \subset B(x,r)$
   \end{tabular}
  \right\rbrace,
\end{equation*}

\begin{equation*}
  \mathcal{T}^{cc}(X) = \inf\left\lbrace r>0 \;\middle|\;
  \begin{tabular}{@{}l@{}}
    \text{ there exist $x\in S_X$ and a convex  }\\ \text{ combination of slices $C$ of $B_X$}\\ \text{  such that} $C \subset B(x,r)$
   \end{tabular}
  \right\rbrace.
\end{equation*}

The well known characterisations of the Daugavet property in terms of slices, weakly open sets and convex combinations of slices described in \cite{kssw,shv} read as follows: a Banach space $X$ has the Daugavet property if, and only if, any of the above indices is exactly 2 (in which case all of them coincide). The situation is dramatically different for intermediate values for these indices. In general, the equalities $0\leq \mathcal T^{cc}(X)\leq \mathcal T(X)\leq \mathcal T^s(X)\leq 2$ may be strict. Indeed, there are examples of Banach spaces $X$ for which $\mathcal T^s(X)\geq 1$ but $\mathcal T(X)=0$ \cite{hllnr}. In a similar way, there are examples of $X$ for which $\mathcal T(X)\geq 1$ and $\mathcal T^{cc}(X)=0$. 

There are immediate connections between the indices $\mathcal T^s(X)$ (resp. $\mathcal T(X)$, $\mathcal T^{cc}(X)$) and the infimum of the diameter of slices (resp. non-empty weakly open sets and convex combinations of slices). For instance, if every slice (resp. non-empty relatively weakly open subset, convex combination of slices) of $B_X$ has diameter 2 then $\mathcal T^s(X)\geq 1$ (resp. $\mathcal T(X)\geq 1, \mathcal T^{cc}(X)\geq 1$). 

The question whether the converse holds true was analysed in \cite[Section 3]{hllnr}. The question for the index $\mathcal T^{cc}$ was solved in a negative way in \cite[Example 3.6]{hllnr}. The question for the index $\mathcal T^s$ was also negatively solved, but its solution requires a bit more of attention. Closely related to the fact that $\mathcal T^s(X)\geq 1$ is the \textit{r-big slice property (r-BSP)} defined by Y. Ivakhno in \cite{ivakhno}: a Banach space $X$ is said to have the r-BSP if 
$$r(S):=\inf\{r>0: S\subseteq B(x,r)\mbox{ for some }x\in X\}$$
is greater than or equal to $1$ for every slice $S$ of $B_X$. It is clear that the r-BSP implies $\mathcal T^s(X)\geq 1$. Moreover, if every slice of $B_X$ has diameter 2 then $X$ has the r-BSP. Ivakhno asked whether the converse holds true, that is, whether the r-BSP implies that every slice of $B_X$ has diameter exactly 2. Ivakhno's question was solved in a negative way. In order to shorten notation, given a Banach space $X$, write
$$r(X):=\inf\{r(S): S\mbox{ is a slice of }B_X\};$$ 
$$d(X):=\inf\{\diam(S): S\mbox{ is a slice of }B_X\}.$$
Using the above notation, in \cite[Theorem 3.7]{hllnr} it was proved that $X=JT_\infty$ satisfies the r-BSP but, for every $\varepsilon>0$, there exists a slice of diameter $\leq \sqrt{2}+\varepsilon$; in other words, $r(X)=1<d(X)\leq \sqrt{2}$. Finally, the question whether $\mathcal T(X)\geq 1$ implies that every non-empty relatively weakly open subset of $B_X$ has diameter two remained open \cite[Remark 3.9]{hllnr}.

The aim of this note is to continue with the line of \cite{hllnr} in order to give a solution to two questions. The first one is, in view of the example $JT_\infty$, how small $d(X)$ can be in a Banach space $X$ with $r(X)\geq 1$? The second question we face in this note is precisely the question posed in \cite[Remark 3.9]{hllnr}.

With respect to the first question, the main result is the following.

\begin{theorem}\label{theo:counterbestia}
There exists a Banach space $X$ satisfying that $r(X)=d(X)=1$. 
\end{theorem}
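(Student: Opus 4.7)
The strategy is to refine the $JT_\infty$-based example of \cite[Theorem 3.7]{hllnr}, which only achieves $r(JT_\infty)=1<d(JT_\infty)\leq\sqrt 2$, so as to push the infimum of the diameters of slices all the way down to $1$. Note that $r(X)\leq d(X)$ trivially, since any slice $S$ satisfies $r(S)\leq\diam(S)$; so it will suffice to produce an $X$ with $r(X)\geq 1$ and $d(X)\leq 1$.

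The first step is a parametric modification: for each $p\in[2,\infty)$, let $JT_\infty^{(p)}$ denote the James-tree-type space on $T_\infty$ whose norm replaces the $\ell_2$-sum over admissible disjoint segments in the definition of $JT_\infty$ by an $\ell_p$-sum. The argument for the r-BSP in \cite[Theorem 3.7]{hllnr} uses only the infinite branching of $T_\infty$ and convex combination arguments for segment functionals, not the exponent $2$; it should transfer to $JT_\infty^{(p)}$ essentially verbatim. Concretely, given a slice $S(f,\alpha)$ of $B_{JT_\infty^{(p)}}$ and any $x$, one picks a vertex $t$ far from the supports of $f$ and $x$, sets $z=e_t$ (still a unit vector in $JT_\infty^{(p)}$), and shows that $y\mapsto y+z$ moves points of $S$ inside $S$ while increasing the distance to $x$ by approximately $1$ via the approximate orthogonality of disjoint segment contributions. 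In the other direction, the pair of molecules that witnesses the $\sqrt 2+\varepsilon$-slice in $JT_\infty$ now furnishes, in $JT_\infty^{(p)}$, a slice of diameter at most $2^{1/p}+\varepsilon$, since the combined mass of two orthogonal segment contributions is computed in $\ell_p$ and equals $2^{1/p}$.

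The second step is to glue the family $\{JT_\infty^{(p_n)}\}_n$, with $p_n\to\infty$, into a single Banach space $X$ through a direct sum in which each factor sits as an isometric, $1$-complemented subspace and functionals can be concentrated on one factor. Candidates are $c_0$- or $\ell_q$-sums for a suitable $q$. A slice of $B_X$ given by a functional concentrated on the $n$-th coordinate should then essentially be a slice of $B_{JT_\infty^{(p_n)}}$, hence of diameter at most $2^{1/p_n}+\varepsilon$; letting $n\to\infty$ gives $d(X)\leq 1$. For the r-BSP of $X$, given any slice $S(f,\alpha)$ with $f=(f_n)\in X^*$ and any $x=(x_n)\in X$, one picks a coordinate $n$ with $\|f_n\|$ comparable to $\|f\|$, applies the r-BSP of $JT_\infty^{(p_n)}$ to the renormalised slice induced by $f_n/\|f_n\|$ to obtain $y_n$ with $\|y_n-x_n\|\geq 1-\varepsilon$, and lifts $y_n$ to $y\in S\cap B_X$ by prescribing the remaining coordinates so as to realise $f(y)>1-\alpha$ without violating $\|y\|_X\leq 1$. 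Combined with $r(X)\leq d(X)\leq 1$, this gives $r(X)=d(X)=1$.

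The main obstacle is the lifting step: the three constraints $f(y)>1-\alpha$, $\|y\|_X\leq 1$, and $\|y-x\|_X\geq \|y_n-x_n\|$ are in tension under any single choice of direct-sum norm, and ensuring they are simultaneously achievable is what forces the joint choice of the gluing parameter $q$ and of the sequence $p_n$. The uniform quantitative form of the r-BSP inherited from the infinite branching of $T_\infty$ in every summand, together with the fact that $\|e_t\|=1$ in each $JT_\infty^{(p)}$, is what provides enough flexibility to satisfy all three requirements at once.
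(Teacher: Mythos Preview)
Your overall strategy — a one-parameter family of James-tree-type spaces whose small-slice diameter tends to $1$, glued by a direct sum so that the r-BSP survives — is exactly the paper's strategy. But two of the three ingredients in your execution are either wrong as stated or left open, and the paper resolves both by making different choices.

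\medskip
\textbf{Where your sketch breaks.} Your r-BSP argument for $JT_\infty^{(p)}$ via ``$y\mapsto y+e_t$'' does not work: if $y\in B_{JT_\infty^{(p)}}$ has norm close to $1$, then for any vertex $t$ off the support of $y$ one has $\|y+e_t\|\ge(\|y\|^p+1)^{1/p}>1$, so $y+e_t$ leaves the ball and hence the slice. Renormalising by $2^{1/p}$ costs you on the value of the slice functional, so you do not recover radius $\ge1$ this way; you need the genuine argument, which in the paper runs in the \emph{dual} by extending segments of a molecule (an operation that keeps the molecule normalised). Second, you correctly identify the lifting step in the direct sum as the main obstacle and then do not resolve it; the paragraph ending ``is what provides enough flexibility'' is not an argument.

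\medskip
\textbf{How the paper's choices avoid both problems.} The paper works not with $JT_\infty^{(p)}$ but with its predual $B_\infty^p$, and proves (Theorem~\ref{theo:slicesbinfp}) that $B_\infty^p$ has the r-BSP and $d(B_\infty^p)\le 2^{1/q}$, where $1/p+1/q=1$; so one sends $p\to1^+$ rather than $p\to\infty$. The r-BSP proof passes via Proposition~\ref{prop:propibidual} to $w^*$-slices of $B_{(JT_\infty^p)^*}$ and uses that a molecule $\sum_i\lambda_i f_{S_i}$ can have each segment $S_i$ extended by a suitable successor without changing its norm — this is the step your ``$y+e_t$'' was trying to mimic, but it only works on the dual side. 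For the gluing, the paper takes the $\ell_1$-sum $X=\bigl(\bigoplus_n B_\infty^{p_n}\bigr)_1$ and simply cites Ivakhno's stability theorem \cite[Theorem~3]{ivakhno} to get $r(X)\ge1$; the diameter estimate $d(X)\le1$ then follows from an elementary computation showing that a slice of $B_X$ defined by a functional concentrated on the $n$-th coordinate has diameter at most $d(X_n)+2\alpha$. The $\ell_1$-sum is the choice that kills your ``three constraints in tension'': coordinates other than the $n$-th one contribute at most $\alpha$ in norm, so no delicate lifting is needed.

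\medskip
In short: right plan, wrong space, wrong sum. Switching to the preduals and the $\ell_1$-sum turns the heuristic into a two-line proof.
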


In order to prove the result, motivated by \cite[Theorem 3.7]{hllnr}, we introduce in Section \ref{section:treespaces} a family of Banach spaces $JT_\infty^p$ for $1<p<\infty$, which are a variant of the space $JT_\infty$. We prove that these spaces are dual Banach spaces, whose preduals are denoted by $B_\infty^p$. We prove in Section \ref{section:predualesbuenpropi} that, for every $1<p<\infty$, $1\leq r(B_\infty^p)\leq d(B_\infty^p)\leq 2^\frac{1}{q}$, where $\frac{1}{p}+\frac{1}{q}=1$ (see Theorem \ref{theo:slicesbinfp}). Finally, the desired example $X$ is constructed as an infinite $\ell_1$-sum of these spaces.

We obtain as another consequence of Theorem \ref{theo:slicesbinfp} that, for every $\varepsilon>0$, there exists a Banach space $X$ with the CPCP and such that $d(X)>2-\varepsilon$, which is connected with the open question from \cite{blr15eje1} whether there exists a Banach space $X$ with the PCP and such that every slice of $B_X$ has diameter exactly $2$.

With respect to the second question, we devote Section \ref{section:renoweakopen} to proving the following theorem.

\begin{theorem}\label{theo:counternegadauga}
Let $X$ be a Banach space with the Daugavet property. Then, for every $\varepsilon>0$, there exists an equivalent renorming $\vert\cdot\vert_\varepsilon$ such that:
\begin{enumerate}
    \item $X$ fails the r-BSP and, in particular, its unit ball contain slices of diameter strictly smaller than $2$.
    \item $\mathcal T^{cc}(X)\geq 2-\varepsilon$.
\end{enumerate}
\end{theorem}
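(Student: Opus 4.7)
Given $X$ with the Daugavet property and $\varepsilon>0$, the strategy is to construct an equivalent renorming $|\cdot|_\varepsilon$ on $X$ that makes the r-BSP fail while preserving an approximate Daugavet property on convex combinations of slices. Fix a parameter $\eta=\eta(\varepsilon)>0$ small and, using Hahn--Banach, a norming pair $(x_0,x_0^*)\in S_X\times S_{X^*}$ with $x_0^*(x_0)=1$. The construction will be a mild perturbation of $\|\cdot\|$ of the form
\[
|x|_\varepsilon := \|x\|+\eta\,\rho(x),
\]
for an appropriate sublinear functional $\rho$ built from $x_0^*$ (a natural first candidate being $\rho(x)=|x_0^*(x)|$). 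One verifies immediately that $\|x\|\leq|x|_\varepsilon\leq(1+\eta)\|x\|$, so this is an equivalent norm. The underlying idea is that the perturbation is concentrated in a single direction, mild enough that the Daugavet behaviour on convex combinations of slices survives, yet sharp enough to create one distinguished slice of Chebyshev radius strictly below $1$.

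For (1), I would examine the distinguished slice $S_0$ of $B_{|\cdot|_\varepsilon}$ in the direction of the perturbation, i.e.\ $S_0:=\{x\in B_{|\cdot|_\varepsilon}:x_0^*(x)>M'-\alpha\}$ where $M'=\sup_{B_{|\cdot|_\varepsilon}}x_0^*$. A direct computation gives $M'=1/(1+\eta)$, together with the bound $\|x\|\leq 1/(1+\eta)+\eta\alpha$ for every $x\in S_0$. The key delicate point is then to choose the centre of the smallest enclosing ball strictly inside $B_{|\cdot|_\varepsilon}$ (\emph{not} on its unit sphere), since a centre on the new sphere would immediately force $\mathcal T^s(X,|\cdot|_\varepsilon)<1$ and contradict (2). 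With such a centre one gets $r(S_0)<1$, so the r-BSP fails.

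For (2), let $C=\sum_{i=1}^n\lambda_iS_i$ be a convex combination of slices of $B_{|\cdot|_\varepsilon}$ and let $z\in S_{|\cdot|_\varepsilon}$. Each $S_i=\{y\in B_{|\cdot|_\varepsilon}:g_i(y)>M_i'-\alpha_i\}$ is contained in the slice $\tilde S_i=\{y\in B_X:g_i(y)>M_i'-\alpha_i\}$ of the original unit ball by the \emph{same} functional (at an appropriately shifted level), so $\tilde C:=\sum_i\lambda_i\tilde S_i$ is a convex combination of slices of $B_X$. Since $X$ has the Daugavet property one has $\mathcal T^{cc}(X,\|\cdot\|)=2$, and applied to $\tilde C$ and to $\tilde z:=z/\|z\|\in S_X$ this yields $\tilde y\in\tilde C$ with $\|\tilde z-\tilde y\|$ arbitrarily close to $2$. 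The equivalence $\|\cdot\|\leq|\cdot|_\varepsilon\leq(1+\eta)\|\cdot\|$ together with $|z|_\varepsilon=1$ gives $\|z\|\geq 1/(1+\eta)$ and hence $\|z-\tilde z\|=O(\eta)$; combined with $|z-\tilde y|_\varepsilon\geq\|z-\tilde y\|$, one obtains $|z-\tilde y|_\varepsilon\geq 2-O(\eta)$.

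The main obstacle is the final step: upgrading $\tilde y\in\tilde C$ to an honest $y\in C$, since the Daugavet witness is a priori only known to lie in the (possibly strictly larger) slices $\tilde S_i$. I expect to overcome this by exploiting the rank-one nature of the perturbation and correcting each coordinate $\tilde y_i$ by a small adjustment along the line $\mathbb R\cdot x_0$ (for instance by rescaling $\tilde y_i$ by a factor close to $1$, or by subtracting a small multiple of $x_0$) so as to bring it inside $B_{|\cdot|_\varepsilon}$ while changing the $|\cdot|_\varepsilon$-distance to $z$ by at most $O(\eta)$. With $\eta$ chosen small enough in terms of $\varepsilon$, these losses combine to give $|z-y|_\varepsilon\geq 2-\varepsilon$, as required.
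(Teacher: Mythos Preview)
Your renorming $|x|_\varepsilon=\|x\|+\eta\,|x_0^*(x)|$ does not deliver (1). The bound $\|x\|\leq \frac{1}{1+\eta}+\eta\alpha$ on $S_0$ controls the \emph{original} norm, whereas the r-BSP concerns the radius measured in $|\cdot|_\varepsilon$. Concretely, take $X=L_1[0,1]$ (a Daugavet space) with $x_0=x_0^*=\mathbbm 1$. For $f_n=\frac{n}{1+\eta}\chi_{[0,1/n]}$ one has $f_n\in S_0$, and for every proposed centre $c$,
\[
|f_n-c|_\varepsilon=\|f_n-c\|_1+\eta\Bigl|\tfrac{1}{1+\eta}-\textstyle\int c\Bigr|\ \xrightarrow[n\to\infty]{}\ \tfrac{1}{1+\eta}+\|c\|_1+\eta\Bigl|\tfrac{1}{1+\eta}-\textstyle\int c\Bigr|\ \geq\ 1,
\]
so $r(S_0)=1$ and r-BSP does \emph{not} fail. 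Geometrically, your norm \emph{pinches} $B_X$ in the $x_0^*$-direction, so $S_0$ is (up to scaling) the face $\{x_0^*=1\}\cap B_X$, which in a Daugavet space can be huge. For (2), the obstacle you flag is genuine and your fixes do not resolve it: rescaling $\tilde y_i$ by $1/(1+\eta)$ shrinks $g_i(\tilde y_i)$ by the same factor and may push it below the slice threshold, while translating by $-t x_0$ need not decrease $\|\cdot\|$ (hence need not decrease $|\cdot|_\varepsilon$).

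The paper's construction goes in the opposite direction: it \emph{enlarges} $B_X$ to the Minkowski sum $C_\varepsilon=B_X+\varepsilon B_\delta$, where $B_\delta=\overline{\co}\bigl(B_X\cup\{\pm(1+\delta)x_0\}\bigr)$ carries a spike at $(1+\delta)x_0$. The spike forces the slice of $C_\varepsilon$ in direction $x_0^*$ to cluster near the interior point $\varepsilon(1+\delta)x_0$, yielding radius strictly below $1$. More importantly, the additive structure solves your obstacle in (2) cleanly: writing $x_i=u_i+\varepsilon v_i\in S_i$ with $u_i\in B_X$, $v_i\in B_\delta$, one applies the Daugavet property to the $B_X$-slices $\{u:g_i(u)>1-\alpha_i-\varepsilon g_i(v_i)\}$ to get $\hat u_i$, and then $\hat u_i+\varepsilon v_i\in B_X+\varepsilon B_\delta=C_\varepsilon$ lies in $S_i$ automatically. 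No rescaling or correction is needed because the $B_X$-summand can be swapped freely.
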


A particular consequence of the above theorem is that a Banach space $X$ may have $\mathcal T(X)\geq 1$ and yet containing non-empty weakly open subsets of $B_X$ of diameter strictly smaller than 2, which solves \cite[Remark 3.9]{hllnr}. Moreover, incidentally, we find an example of a Banach space $X$ for which $T^s(X)>1$ but with $B_X$ containing slices of radius strictly smaller than $1$. In other words, given a slice $S$ of $B_X$, the inequality
\[\begin{split}
\inf\{r>0: S\subseteq B(x,r)\mbox{ for some }x\in X\}\leq \\  \inf\{r>0: S\subseteq B(x,r)\mbox{ for some }x\in S_X\}
\end{split}\]
may be strict.

\bigskip

\textbf{Terminology:} We only consider real Banach spaces. The closed unit ball of a Banach space $X$ is denoted
by $B_X$ and its unit sphere by $S_X$. The dual space of $X$ is
denoted by $X^\ast$ and the bidual by $X^{\ast\ast}$.

By a \emph{slice} of $B_X$ we mean a set of the form
\begin{equation*}
S(B_X, x^*,\alpha) :=
\{
x \in B_X : x^*(x) > 1 - \alpha
\},
\end{equation*}
where $x^* \in S_{X^*}$ and $\alpha > 0$. If $X$ is a dual space, say $X=Y^*$ and $x\in Y$, the previous set is said to be a \textit{$w^*$-slice of $B_X$}. A finite convex combination of slices is a set of the form 
\[
\sum_{i=1}^n\lambda_i S(B_X, x^*_i, \alpha_i),
\]
where $n\in \mathbb N$ and $\lambda_i\in [0,1]$ such that $\sum_{i=1}^n\lambda_i=1$. 

We write ${\rm co}(D)$ (resp., $\overline{{\rm co}}(D)$) to denote the convex hull (resp., closed convex hull) of~$D$.

A Banach space $X$ is said to have the Daugavet property if every rank-one operator $T:X\longrightarrow X$ satisfies the equality
\begin{equation}\label{ecuadauga}
\Vert T+I\Vert=1+\Vert T\Vert,
\end{equation}
where $I$ denotes the identity operator on $X$. We refer the reader to \cite{kssw,shv,wer} and references therein for a detailed treatment on the Daugavet property. In Section \ref{section:renoweakopen} we will make use of the following characterisation of the Daugavet property: a Banach space $X$ has the Daugavet property if, and only if, for every $x\in S_X$, every convex combination of slices $C$ of $B_X$ and every $\varepsilon>0$ there exists $y\in C$ such that $\Vert x-y\Vert>2-\varepsilon$ (see the proof of \cite[Lemma 3]{shv}).

\section{Construction of the tree spaces}\label{section:treespaces}

We  begin with the construction of a family of Banach spaces which are a modification of the space $JT_\infty$, which we will call $JT_\infty^p$.

Let us define
$$T_\infty:=\{(\alpha_1,\ldots, \alpha_k)\ :\ k\in\mathbb N, \alpha_1,\ldots, \alpha_n\in\mathbb N\}\cup \{\emptyset\}.$$

Given $(\alpha_1,\ldots, \alpha_k),(\beta_1,\ldots, \beta_p)\in T_\infty$
we say that
$$(\alpha_1,\ldots, \alpha_k)\leq (\beta_1,\ldots, \beta_p)\Leftrightarrow \left\{\begin{array}{cc}
\vert (\alpha_1,\ldots, \alpha_k)\vert\leq \vert(\beta_1,\ldots, \beta_p)\vert  & \ \\
\alpha_i=\beta_i & \forall 1\leq i\leq k,
\end{array}\right. $$
 where  $\vert
(\alpha_1,\ldots, \alpha_n)\vert :=n$ and $\vert \emptyset\vert :=0$. This binary  relation defines a partial order on $T_\infty$. Given an element $(\alpha_1,\ldots, \alpha_k)\in T_\infty$, the elements $(\alpha_1,\ldots, \alpha_k, n), n\in\mathbb N$ are called \textit{successors of $(\alpha_1,\ldots, \alpha_k)$}.

A segment in $T_\infty$ is a totally ordered and finite subset $S\subseteq T_\infty$.

Set $1<p<\infty$ and, given a finitely supported function $x:T_\infty\longrightarrow \mathbb R$ (i.e. $\supp(x):=\{t\in T_\infty\ |\ x(t)\neq 0\}$ is finite), let us consider
$$\Vert x\Vert=\sup\left( \sum_{i=1}^n  \left\vert \sum_{t\in S_i}x(t)\right\vert^p \right)^\frac{1}{p},$$
where the sup is taken over all families $\{S_1,\ldots,S_n\}$ of disjoint segments of $T_\infty$.

Now we define $JT_\infty^p$ as the completion of  the space of finitely nonzero functions defined on $T_\infty$  for the above norm. Observe that, for $p=2$, the space is the classical $JT_\infty$ defined in \cite{goma}.

Up to our knowledge, the spaces $JT_\infty^p$ have not been previously considered in the literature for $p\neq 2$. Let us point out, however, that such family of spaces has been considered when working with binary trees, see e.g. \cite{bpv}.

Given
$t\in T_\infty$ let us define
$$e_t(s):=\left\{\begin{array}{cc}
1 & \mbox{if } s=t,\\
0 & \mbox{otherwise.}
\end{array} \right.$$

Then it is known that $\{e_t\}_{t\in T_\infty}$ is a (countable) Markusevic basis for $JT_\infty$ and that $JT_\infty$ is a dual space. We denote by $\{e_t^*\}_{t\in T_\infty}$ the biorthogonal functionals of $\{e_t\}_{t\in T_\infty}$ . Then $B_\infty := \overline{\spann}\{e_t^*\ :\ t\in T_\infty\}$, where the closure is taken in $JT_\infty^*$, is a complete predual of $JT_\infty$.

Our aim is to prove that the above also holds true for the space $JT_\infty^p$ for $1<p<\infty$. In order to do so, let us first introduce a bit of notation. Following \cite{goma}, a subset $A\subseteq T_\infty$ is \textit{full} if for every segment $S$ of $T_\infty$ the set $S\cap A$ is a segment. In this case, the projection $P_A:JT_\infty^p\longrightarrow JT_\infty^p$ given by 
$$P_A\left(\sum_{t\in T_\infty} x_t e_t \right):=\sum_{t\in A} x_t e_t$$
is a contraction. The adjoint operator $P_A^*$ defines a contraction from $B_\infty^p\longrightarrow B_\infty^p$.
Given $n\in\mathbb N$ define $L_n:=\{t\in T_\infty : \vert t\vert=n\}$. Denote by $P_n$ the projection defined by $L_n$. Given $n<m$ we denote by $P_n^m$ the projection defined by $L_n\cup L_{n+1}\cup\ldots\cup L_m$.

Let us observe that, given any finite set $F\subseteq T_\infty$, there exists another finite and full set $\tilde F\subseteq T_\infty$ containing $F$. Indeed, a simple cardinality argument shows that, since $F$ is finite, there exists $m\in\mathbb N$ such that, given any $t=(\alpha_1,\ldots, \alpha_k)\in F$, then $k\leq m$ and $\alpha_i\leq m$ holds for every $1\leq i\leq k$. It is not difficult to prove that
$$\tilde F:=\{(\beta_1,\ldots,\beta_m)\in T_\infty: \beta_i\leq m\mbox{ holds for all }1\leq i\leq m\}$$
is full, and clearly $F\subseteq \tilde F$. 

Using this simple observation we start by proving the following proposition.

\begin{proposition}\label{prop:acocompleta}
    Let $1<p<\infty$. Let $(a_t)_{t\in T_\infty}\subseteq \mathbb R$ such that
    $$\sup\left\{\left\Vert \sum_{t\in F} a_t e_t \right\Vert: F\subseteq T_\infty\mbox{ is finite and full} \right\}<\infty.$$
    Then the $\sum_{t\in T_\infty} a_t e_t$ converges.
\end{proposition}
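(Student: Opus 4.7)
My plan is to verify the Cauchy criterion for the net of finite partial sums $\{\sum_{t\in F} a_t e_t : F\subseteq T_\infty \text{ finite}\}$ directed by inclusion. Equivalently, I will show that for any sequence $(F_n)_{n\geq 1}$ of pairwise disjoint finite subsets of $T_\infty$ the norms $\|\sum_{t\in F_n} a_t e_t\|$ tend to $0$. Once this is proved, the net is norm-Cauchy, hence convergent, which yields the convergence of the series $\sum_{t\in T_\infty} a_t e_t$ in $JT_\infty^p$ (in particular along any enumeration of $T_\infty$ respecting the tree order).

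Suppose for contradiction this fails: there exist $\varepsilon>0$ and pairwise disjoint finite subsets $F_1,F_2,\ldots\subseteq T_\infty$ with $\|x_n\|\geq \varepsilon$, where $x_n:=\sum_{t\in F_n} a_t e_t$. By the definition of the norm, for each $n$ I can pick pairwise disjoint segments $S_1^n,\ldots,S_{k_n}^n$ of $T_\infty$ with $\sum_{i=1}^{k_n} \bigl|\sum_{t\in S_i^n} x_n(t)\bigr|^p \geq (\varepsilon/2)^p$. The key observation is that a segment here is defined as an arbitrary totally ordered finite subset of $T_\infty$, so any subset of a segment is again a segment. Since $\supp(x_n)\subseteq F_n$, replacing each $S_i^n$ by $S_i^n\cap F_n$ keeps the inner sum $\sum_{t\in S_i^n} x_n(t)$ unchanged while ensuring $S_i^n\subseteq F_n$.

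After this reduction, for any $N\in\mathbb N$ the collection $\{S_i^n : n\leq N,\, 1\leq i\leq k_n\}$ is a family of pairwise disjoint segments of $T_\infty$ (disjointness coming from the disjointness of the $F_n$). Plugging this family into the norm formula applied to $y_N:=\sum_{n=1}^N x_n = \sum_{t\in \bigcup_{n\leq N} F_n} a_t e_t$ gives $\|y_N\|^p \geq \sum_{n=1}^N (\varepsilon/2)^p = N(\varepsilon/2)^p$, which forces $\|y_N\|\to\infty$ and contradicts the hypothesis $\sup_F \|\sum_{t\in F} a_t e_t\|<\infty$ once $N$ is large enough. The only delicate point—really just an observation—is the closure of the family of segments under intersection with arbitrary subsets of $T_\infty$; this is specific to the definition adopted here and is what allows the witnessing segments for different blocks to be made pairwise disjoint at no cost. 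With a more restrictive notion of segment (e.g.\ intervals of the tree order) a small extra argument would be needed to relocate the witnessing segments inside the $F_n$'s.
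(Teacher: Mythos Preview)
Your argument is correct and follows essentially the same route as the paper's proof: negate the Cauchy criterion to obtain pairwise disjoint finite blocks $F_n$ with uniformly large norms, choose near-optimal families of disjoint segments inside each block, and then use the disjointness of the $F_n$'s to pool all these segments into one big admissible family, forcing $\bigl\|\sum_{t\in\bigcup_{n\le N}F_n}a_te_t\bigr\|\to\infty$. Your remark that the paper's notion of segment (any finite totally ordered subset) is closed under taking subsets is exactly the ``cutting'' step the paper invokes, and your lower bound $\|y_N\|\ge N^{1/p}(\varepsilon/2)$ is in fact the accurate estimate (the paper's displayed bound $n\delta$ is a harmless slip).
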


\begin{proof} Assume for contradiction that $\sum_{t\in T_\infty} a_t e_t$ does not converge. Then, a negation of the Cauchy condition of summability, yields in particular a sequence of pairwise disjoint finite subsets $F_n\subseteq T_\infty$ and $\delta>0$ such that $\Vert \sum_{t\in F_n}a_t e_t\Vert>\delta$ for every $n\in\mathbb N$. Given $n\in\mathbb N$ find a family of disjoint segments $S_1^n,\ldots S_{k_n}^n$ in $T_\infty$ such that $\left(\sum_{i=1}^{k_n}\left\vert \sum_{t\in S_i^n} a_t\right\vert^p\right)^{\frac{1}{p}}>\delta$. Since $\supp (\sum_{t\in F_n}a_t e_t)\subseteq F_n$ we can assume with no loss of generality, up to cutting some segments, that $S_1^n,\ldots S_{k_n}^n$ are contained in $F_n$, which makes the family of segments $\{S_i^n: n\in\mathbb N, 1\leq i\leq k_n\}$ pairwise disjoint. We can now find a full and finite set $F$ containing $\cup_{i=1}^n F_i$. Now
\[\begin{split}
\left\Vert \sum_{t\in F} a_t e_t\right\Vert\geq \sum_{i=1}^n \left(\sum_{j=1}^{k_i} \left\vert \sum_{t\in S_j^i}a_t  \right\vert^p\right)^\frac{1}{p}\geq \sum_{i=1}^n \delta=n\delta,
\end{split}\]
which contradicts the fact that $\sup\left\{\left\Vert \sum_{t\in F} a_t e_t \right\Vert: F\subseteq T_\infty\mbox{ is finite and full} \right\}<\infty$ and proves the result.
\end{proof}

For consistency with the notation of \cite{goma} we write $B_\infty^p:=\overline{\spann}\{e_t^*: t\in T_\infty\}\subseteq (JT_\infty^p)^*$.

Also for consistency with the notation of \cite[Section 2]{blr15},  we define a molecule as a functional of the form
$$x^*:=\sum_{i=1}^n \lambda_i f_{S_i}$$
for $S_1,\ldots, S_n$ disjoint segments of $T_\infty$ and $\sum_{i=1}^n
\vert\lambda_i\vert^q= 1$ for $\frac{1}{p}+\frac{1}{q}=1$, where $f_S\in B_{(JT_\infty^p)^*}$ is defined by the equation
$$f_S(x):=\sum_{t\in S} x(t)$$
whenever $S\subseteq T_\infty$ is a segment of $T_\infty$.

Denote by $M$ the set of molecules in $(JT_\infty^p)^*$.

\begin{lemma}\label{normantejt}\
$M$ is a norming subset of $B_{(JT_\infty^p)^*}$. As a consequence
\begin{equation}\label{describoladualjt}
B_{(JT_\infty^p)^*}=\overline{\co}^{w^*}(M).
\end{equation}
\end{lemma}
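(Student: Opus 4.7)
The plan is to exploit Hölder's inequality in both directions: one direction to show that every molecule lies in the dual unit ball, the other direction (via its equality case) to produce molecules whose action on a prescribed $x\in JT_\infty^p$ is nearly $\Vert x\Vert$. The consequence will then follow from the bipolar theorem, after observing that $M$ is symmetric.

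First I would verify $M\subseteq B_{(JT_\infty^p)^*}$. Given a molecule $x^*=\sum_{i=1}^n\lambda_i f_{S_i}$ with disjoint segments and $\sum|\lambda_i|^q=1$, and $x\in JT_\infty^p$, Hölder's inequality with conjugate exponents $p,q$ gives
\[
|x^*(x)|=\left|\sum_{i=1}^n\lambda_i\sum_{t\in S_i}x(t)\right|\leq \left(\sum_{i=1}^n|\lambda_i|^q\right)^{1/q}\left(\sum_{i=1}^n\left|\sum_{t\in S_i}x(t)\right|^p\right)^{1/p}\leq \Vert x\Vert,
\]
the last bound coming straight from the definition of the norm on $JT_\infty^p$ with the family $\{S_1,\dots,S_n\}$.

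Next I would prove that $M$ norms $JT_\infty^p$. Fix $x\in JT_\infty^p$ finitely supported (such vectors are dense) and $\varepsilon>0$. By the very definition of the norm there are disjoint segments $S_1,\dots,S_n$ in $T_\infty$ so that, writing $a_i:=\sum_{t\in S_i}x(t)$ and $A:=\bigl(\sum_{i=1}^n|a_i|^p\bigr)^{1/p}$, one has $A>\Vert x\Vert-\varepsilon$. Assuming $A>0$ (otherwise $\Vert x\Vert<\varepsilon$ and the zero molecule works), I would use the Hölder equality case: set
\[
\lambda_i:=\frac{\operatorname{sign}(a_i)\,|a_i|^{p-1}}{A^{p/q}},\qquad i=1,\dots,n.
\]
Using $(p-1)q=p$, a direct check gives $\sum_{i=1}^n|\lambda_i|^q=A^{-p}\sum_{i=1}^n|a_i|^p=1$, so $x^*:=\sum_i\lambda_i f_{S_i}\in M$. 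Moreover
\[
x^*(x)=\sum_{i=1}^n\lambda_i a_i=\frac{1}{A^{p/q}}\sum_{i=1}^n|a_i|^p=A^{\,p-p/q}=A>\Vert x\Vert-\varepsilon,
\]
which combined with the previous step yields $\sup_{x^*\in M}x^*(x)=\Vert x\Vert$ for all $x\in JT_\infty^p$.

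Finally, to derive \eqref{describoladualjt} I would invoke the bipolar theorem. Since $M$ is symmetric ($-x^*\in M$ whenever $x^*\in M$, obtained by flipping the signs of the $\lambda_i$'s), the polar $M^\circ$ in $JT_\infty^p$ equals $\{x:\sup_{x^*\in M}|x^*(x)|\leq 1\}=B_{JT_\infty^p}$ by the norming property just established. The bipolar theorem then gives $\overline{\operatorname{co}}^{w^*}(M\cup\{0\})=M^{\circ\circ}=(B_{JT_\infty^p})^{\circ}=B_{(JT_\infty^p)^*}$, and symmetry of $M$ forces $0\in\operatorname{co}(M)$, so the $\{0\}$ can be dropped. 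The only conceptually delicate step is the second one, where one must identify the correct convex combination attaining equality in Hölder; everything else is routine manipulation and a standard duality argument.
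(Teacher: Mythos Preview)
Your proof is correct and follows essentially the same route as the paper: H\"older for $M\subseteq B_{(JT_\infty^p)^*}$, the H\"older equality case to produce a norming molecule, and a separation/bipolar argument for the consequence. The only cosmetic differences are that you write out the dual sequence $\lambda_i=\operatorname{sign}(a_i)|a_i|^{p-1}/A^{p/q}$ explicitly (the paper just says ``find $(\lambda_1,\dots,\lambda_n)\in S_{\ell_q^n}$ such that\dots''), you restrict to finitely supported $x$ and invoke density (the paper works directly with $x\in S_{JT_\infty^p}$), and the paper additionally verifies $\Vert x^*\Vert=1$ exactly, which is not needed for the lemma but is used in the subsequent remark.
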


\begin{proof}
Let $x^*=\sum_{i=1}^n \lambda_i f_{S_i}\in M$. Let us prove that $\Vert x^*\Vert\leq 1$. In order to do so, pick $x\in B_{JT_\infty^p}$. An application of H\"older inequality derives
\[
\begin{split}
x^*(x)=\sum_{i=1}^n \lambda_i f_{S_i}(x)\leq \sum_{i=1}^n \vert \lambda_i\vert \vert f_{S_i}(x)\vert& \leq \left(\sum_{i=1}^n \vert \lambda_i\vert^q \right)^\frac{1}{q}\left(\sum_{i=1}^n \vert f_{S_i}(x)\vert^p \right)^\frac{1}{p}\\
& \leq \left(\sum_{i=1}^n \left\vert \sum_{t\in S_i} x(t) \right\vert^p \right)^\frac{1}{p}\leq 1
\end{split}
\]
since $\Vert x\Vert\leq 1$ by assumptions. This proves that $\Vert x^*\Vert\leq 1$. On the other hand, take $\mu_i\in S_{\ell_p^n}$ such that $\sum_{i=1}^n \lambda_i\mu_i=1$. Now, taking $x:T_\infty\longrightarrow \mathbb R$ such that $\sum_{t\in S_i} x(t)=\mu_i$ it is immediate that $\Vert x\Vert=1$ and that $x^*(x)=1$, so $\Vert x^*\Vert=1$.

Now it is time to prove that $M$ is norming for $JT_\infty^p$. In order to do so, take $x\in JT_\infty^p$ and $\varepsilon>0$, and let us find $x^*\in M$ with $x^*(x)>1-\varepsilon$. Since $\Vert x\Vert=1$ there exist, by the definition of the norm, pairwise disjoint segments $S_1,\ldots, S_n$ in $T_\infty$ such that $\left(\sum_{i=1}^n \left\vert \sum_{t\in S_i} x(t)\right\vert^p\right)^\frac{1}{p}>1-\varepsilon$. Now find $(\lambda_1,\ldots, \lambda_n)\in S_{\ell_q^n}$ such that $\sum_{i=1}^n \lambda_i f_{S_i}(x)>1-\varepsilon$. Now set $x^*:=\sum_{i=1}^n \lambda_i f_{S_i}\in M$. It is clear that $x^*(x)>1-\varepsilon$, as desired. 

From a separation argument we get now that
$B_{(JT_\infty^p)^*}=\overline{\co}^{w^*}(M)$.
\end{proof}

\begin{remark}\label{rem:normamole}
    By the above, if $\lambda_1,\ldots, \lambda_n\in\mathbb R$ and $S_1,\ldots, S_n$ are pairwise disjoint segments of $T_\infty$ it follows
    $$\left\Vert \sum_{i=1}^n \lambda_i f_{S_i}\right\Vert=\left(\sum_{i=1}^n \vert \lambda_i\vert^q \right)^\frac{1}{q}.$$
\end{remark}

Now we are ready to prove the following desired result.

\begin{proposition}
Given $1<p<\infty$, the mapping $\phi:JT_p^\infty\longrightarrow (B_\infty^p)^*$ defined by $\phi(x)(e_t^*):=e_t^*(x)$ is an onto linear isometry.
\end{proposition}

\begin{proof}
It is immediate that $\phi$ is linear and continuous. The fact that $\phi$ is isometric follows from the fact that the set of all molecules, which is norming for $JT_\infty^p$ by Lemma \ref{normantejt}, is contained in $B_\infty^p$. 

It remains to prove that $\phi$ is onto, for which we will make use of Proposition \ref{prop:acocompleta}. In order to do so, take any functional $z\in (B_\infty^p)^*$. We define the function $x:T_\infty\longrightarrow \mathbb R$ defined by $x(t):=z(e_t^*)$ for every $t\in T_\infty$. We aim to prove that $x$ belongs to $JT_\infty^p$ because, once we have this proved, it is immediate that $\phi(x)=z$ and the proof would be finished. In order to prove that $x\in JT_\infty^p$ we will make use of Proposition \ref{prop:acocompleta}. To this end, take a finite and full set $F\subseteq T_\infty$ and take any molecule $\sum_{i=1}^n\lambda_i f_{S_i}\in M$. Now
$$\sum_{i=1}^n \lambda_i f_{S_i}\left(\sum_{t\in F}x(t)e_t\right)=\sum_{i=1}^n \lambda_i \sum_{t\in S_i\cap F} x(t)=\sum_{i=1}^n \lambda_i \sum_{t\in S_i\cap F} z(e_t^*).$$
Since $F$ is full we get that $S_i\cap F$ is a segment for every $1\leq i\leq n$, and clearly they form a collection of pairwise disjoint segments. Moreover observe that $\sum_{t\in S_i\cap F} z(e_t^*)=z(f_{S_i\cap F})$ by definition. Using the above equality together with the linearity of $z$ we infer
$$\sum_{i=1}^n \lambda_i f_{S_i}\left(\sum_{t\in F}x(t)e_t\right)=z\left(\sum_{i=1}^n \lambda_i f_{S_i\cap F}\right)\leq \Vert z\Vert,$$
where we have used that $\sum_{i=1}^n \lambda_i f_{S_i\cap F}$ is a molecule since $S_i\cap F$ is a segment for every $i$, that $M$ is contained in the unit sphere of $B_\infty^p$ and the continuity of $z$. 
Now Proposition \ref{normantejt} together with the arbitrariness of the molecule taken implies that
$$\left\Vert \sum_{t\in F} x(t)e_t \right\Vert=\sup_{\varphi\in M}\varphi\left( \sum_{t\in F} x(t)e_t\right)\leq \Vert z\Vert.$$
Proposition \ref{prop:acocompleta} implies that $\sum_{t\in T_\infty} x(t)e_t$ converges (and clearly converges to $x$). In particular, this implies that $x\in JT_\infty^p$. The fact that $\phi(x)=z$ is immediate now, which proves that $\phi$ is onto and finishes the proof.
\end{proof}

The rest of the section is devoted to showing that $B_\infty^p$ has the \textit{convex point of continuity property} for every $1<p<\infty$. Recall that a Banach space $X$ is said to have the \textit{point of continuity property (PCP)} (respectively the \textit{convex point of continuity property (CPCP)}) if every non-empty, bounded and closed (respectively every non-empty, bounded, closed and convex) subset of $B_X$ contains non-empty relatively weakly open subsets of arbitrarily small diameter. It is immediate that the PCP implies the CPCP. The converse is not true \cite{gms}.

The proof that $B_\infty^p$ has the CPCP is an adaptation of the proof of \cite[Theorem 2.2]{gms}, where it is proved that the predual of $JT_\infty$ has the CPCP.

Now we have the following result, which is a variant of \cite[Theorem IV.2]{goma}.

\begin{proposition}\label{prop:limpnpredual}
Given $1<p<\infty$ we have
$$B_\infty^p=\{y^*\in (JT_\infty^p)^*: \liminf \Vert P_n^*(y^*)\Vert_\infty=0\}.$$
where $\Vert\cdot\Vert_\infty$ stands for the sup-norm for a function defined on $T_\infty$.
\end{proposition}

\begin{proof}
The inclusion $\subseteq$ is clear, so let us prove $\supseteq$. In order to do so, assume that $y^*\in (JT_\infty^p)^*$ satisfies that $d(y^*, B_\infty^p)\geq \varepsilon$. A variation of the argument \cite[Lemma IV.2]{goma} allows to find a full subtree $T_1\subseteq T_\infty$ with finitely-many branching points such that $\Vert y^*-P_{T_1}^*(y^*)\Vert<\frac{\varepsilon}{2}$. Indeed, take $(\varepsilon_t)_{t\in T_\infty}$ positive numbers such that $\sum_{t\in T_\infty}\varepsilon_t<\frac{\varepsilon}{2}$. For every $t\in T_\infty$ call $S_t:=\{s\in T_\infty: s\geq t\mbox{ and } \vert s\vert=\vert t\vert+1\}$. A consequence of Remark \ref{rem:normamole} is that
$$\left\Vert \sum_{s\in S_t} P_{\{s\}}^*(x^*)\right\Vert^q=\sum_{s\in S_t} \Vert P_{\{s\}}^*(x^*) \Vert^q, $$
where $\frac{1}{p}+\frac{1}{q}=1$. In view of the above equality we can find a finite set $S_t^1$ such that
$$\sum_{s\in S_t\setminus S_t^1}\Vert P_{\{s\}}^*(x^*) \Vert^q=\left\Vert \sum_{s\in S_t\setminus S_t^1} P_{\{s\}}^*(x^*)\right\Vert^q<\varepsilon_t.$$
Now the construction of $T_1$ is clear. For every $t\in T$ consider only the sucessors of $T$ which belong to $S_t^1$, and repeat the argument with those elements. Observe that the terms eliminated from $x^*$ have norm smaller than $\sum_{t\in T_\infty}\varepsilon_t<\frac{\varepsilon}{2}$.

Now consider $x^*:=P_{T_1}^*(y^*)$. Observe that $d(x^*,B_1)\geq \frac{\varepsilon}{2}$, where $B_1:=\overline{\spann}\{e_t^*:t\in T_1\}$. We claim that there exists a branch $B\subseteq T_1$ (which is still a branch in $T_\infty$) such that $\lim_{t\in B} x^*(e_t)\neq 0$. Indeed, set $\Gamma$ to be the set of all the branches in $T_1$. It follows that the operator $S:(JT_1)_p'\longrightarrow \mathbb R^\Gamma$ defined by
$$S(\phi):=\left(\lim_{t\in \gamma} \phi(e_t) \right)_{\gamma\in \Gamma}$$
is a well defined operator \cite[pp. 11]{bpv} whose kernel equals $B_1$ \cite[pp. 12]{bpv}. Since $x^*(e_t)=y^*(e_t)$ for every $t\in T_1$ we infer that $y^*\notin \{y^*\in (JT_\infty^p)^*: \liminf \Vert P_n^*(y^*)\Vert_\infty=0\}$, and the proof is finished.
\end{proof}

The following corollary is a variant of \cite[Lemma 1.2]{gms}, whose proof is immediate from the above proposition.

\begin{corollary}\label{coro:finalproof}
Let $y^*\in (JT_\infty^p)^*$ satisfying that
$$\liminf_n \Vert P_n^*(y^*)\Vert_\infty=0.$$
Then there is not any increasing sequence $(n_k)_k$ and $\alpha>0$ such that
$$\left\Vert (P_{n_k+1}^{n_{k+1}})^* (y^*)\right\Vert_\infty\geq \alpha.$$
\end{corollary}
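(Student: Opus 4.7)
The plan is to reduce the statement directly to Proposition~\ref{prop:limpnpredual}. By that proposition, the hypothesis $\liminf_n \Vert P_n^*(y^*)\Vert = 0$ places $y^*$ in the predual $B_\infty^p = \overline{\spann}\{e_t^* : t \in T_\infty\}$. In particular, for any prescribed accuracy $\varepsilon>0$, density yields a finitely supported functional $w^* = \sum_{t\in F} a_t e_t^*$, with $F\subseteq T_\infty$ finite, such that $\Vert y^* - w^*\Vert < \varepsilon$.

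I would then argue by contradiction. Assume there exist an increasing sequence $(n_k)_k$ and $\alpha > 0$ with $\Vert (P_{n_k+1}^{n_{k+1}})^*(y^*)\Vert \geq \alpha$ for every $k$. Pick $\varepsilon < \alpha$, choose $w^*$ as above, and set $N := \max\{|t| : t \in F\}$. For every $k$ large enough that $n_k \geq N$, every $t \in F$ satisfies $|t| \leq N < n_k+1$, so $P_{n_k+1}^{n_{k+1}}(e_t) = 0$ and consequently $(P_{n_k+1}^{n_{k+1}})^*(w^*) = 0$.

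Next I would use the fact that the level-interval $L_{n_k+1}\cup\cdots\cup L_{n_{k+1}}$ is a full subset of $T_\infty$ (intersecting any chain with an interval of levels gives a sub-chain), so $P_{n_k+1}^{n_{k+1}}$ is a contractive projection on $JT_\infty^p$ and hence so is its adjoint on $(JT_\infty^p)^*$. Combining this contractivity with the triangle inequality gives, for all such $k$,
$$\Vert (P_{n_k+1}^{n_{k+1}})^*(y^*)\Vert \leq \Vert (P_{n_k+1}^{n_{k+1}})^*(y^* - w^*)\Vert + \Vert (P_{n_k+1}^{n_{k+1}})^*(w^*)\Vert \leq \Vert y^* - w^*\Vert < \varepsilon < \alpha,$$
contradicting the standing assumption $\Vert (P_{n_k+1}^{n_{k+1}})^*(y^*)\Vert \geq \alpha$.

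I do not foresee a substantive obstacle: as the paper itself notes, the conclusion is immediate from Proposition~\ref{prop:limpnpredual}. The only verification required is the fullness of the level-interval $L_{n_k+1}\cup\cdots\cup L_{n_{k+1}}$, which is transparent from the definitions, together with the elementary observation that $(P_{n_k+1}^{n_{k+1}})^*(w^*)$ vanishes once the band lies strictly above the finite support $F$.
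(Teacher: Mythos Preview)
Your argument is correct and is exactly the kind of direct unpacking the paper has in mind when it says the corollary is immediate from Proposition~\ref{prop:limpnpredual}: membership in $B_\infty^p$, approximation by a finitely supported functional, and contractivity of the full band projection combine to force $\Vert (P_{n_k+1}^{n_{k+1}})^*(y^*)\Vert\to 0$. The only cosmetic point is that Proposition~\ref{prop:limpnpredual} is stated with the $\Vert\cdot\Vert_\infty$ norm while the corollary's hypothesis uses the dual norm, but since $\Vert P_n^*(y^*)\Vert_\infty\leq \Vert P_n^*(y^*)\Vert$ this is harmless.
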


In order to prove that $B_\infty^p$ has the CPCP the following lemma is needed.

\begin{lemma}\label{lemma:cpcp}
    Set $1<p<\infty$ and $C$ be a closed convex subset of $B_\infty^p$, $M\in\mathbb N$ and $\varepsilon>0$. There exists a relatively weakly open convex subset $U$ of $C$ and $N>M$ such that 
    $$\Vert P_N(U)\Vert_\infty=\sup_{y\in U} \Vert P_N(y)\Vert_\infty<\varepsilon.$$
\end{lemma}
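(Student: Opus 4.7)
\emph{Proof plan.} I argue by contradiction, assuming the conclusion fails: for every relatively weakly open convex subset $U\subseteq C$ and every $N>M$, $\sup_{y\in U}\|P_N(y)\|_\infty\geq \varepsilon$.

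The plan is to construct inductively a nested sequence of non-empty relatively weakly open convex subsets $C=U_0\supseteq U_1\supseteq U_2\supseteq\cdots$, points $y_k\in U_{k-1}$, strictly increasing levels $n_1<n_2<\cdots$ with $n_1>M$, nodes $t_k\in L_{n_k}$ that are \emph{pairwise incomparable} in $T_\infty$, and signs $s_k\in\{-1,+1\}$, such that $s_ky_k(e_{t_k})>\varepsilon/2$ and
\[
U_k:=U_{k-1}\cap\{y\in C:s_ky(e_{t_k})>\varepsilon/2\}.
\]
Each such $U_k$ is relatively weakly open because $e_{t_k}\in JT_\infty^p=(B_\infty^p)^*$ defines a weakly continuous functional on $B_\infty^p$, and convex as an intersection of half-spaces with $C$.

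Granted the construction, the contradiction arises from a norm blow-up. Since $\{t_1,\dots,t_K\}$ is an antichain, no segment of $T_\infty$ contains more than one $t_j$, so the optimal segment decomposition for computing $\|x_K\|_{JT_\infty^p}$ of
\[
x_K:=K^{-1/p}\sum_{j=1}^K s_j e_{t_j}\in JT_\infty^p
\]
is the partition into singletons; the remark following Lemma~\ref{normantejt} then yields $\|x_K\|_{JT_\infty^p}=(K\cdot K^{-1})^{1/p}=1$. Since $y_K\in U_j$ for every $j\leq K$,
\[
y_K(x_K)=K^{-1/p}\sum_{j=1}^K s_j y_K(e_{t_j})>K^{-1/p}\cdot K\cdot\tfrac{\varepsilon}{2}=\tfrac{\varepsilon}{2}K^{1/q},\qquad \tfrac{1}{p}+\tfrac{1}{q}=1,
\]
so $\|y_K\|_{B_\infty^p}>\varepsilon K^{1/q}/2\to\infty$, contradicting the boundedness of $C\subseteq B_\infty^p$.

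The hard part will be the \emph{pairwise incomparability} at the inductive step. Writing $D_{k-1}:=\bigcup_{j<k}\{s\in T_\infty:s\geq t_j\}$ for the finite union of descendant subtrees of earlier witnesses, I must force the next witness $t_k$ to lie in $L_{n_k}\setminus D_{k-1}$. The key ingredient I will use is that every $y\in B_\infty^p$ satisfies $\lim_n\|P_n(y)\|_\infty=0$ (a mild strengthening of Proposition~\ref{prop:limpnpredual}: for $\eta>0$, choose a finite-support $z$ with $\|y-z\|<\eta$, then for $n>\max_{t\in\supp(z)}|t|$ one has $P_n(y)=P_n(y-z)$ and hence $\|P_n(y)\|_\infty\leq\|P_n(y)\|\leq\eta$ via contractivity of $P_n$). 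Using this together with a further refinement of $U_{k-1}$ by finitely many weakly continuous constraints, followed by choosing $n_k$ large enough, I expect to arrange that the level-$n_k$ coefficients of $y\in U_{k-1}$ at nodes in $L_{n_k}\cap D_{k-1}$ are uniformly below $\varepsilon/2$, so that any witness achieving $\|P_{n_k}(y)\|_\infty\geq \varepsilon$ must lie in $L_{n_k}\setminus D_{k-1}$. This tree-pruning step is the direct analogue of the corresponding argument in \cite[Theorem~2.2]{gms} for the case $p=2$, adapted here through the $\ell_p/\ell_q$ duality built into the norm of $JT_\infty^p$.
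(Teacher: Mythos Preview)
Your strategy --- contradiction, nested weakly open convex sets, and a norm blow-up against an antichain --- is a genuinely different route from the paper's. The paper does not argue by contradiction: it points to \cite[Lemma~2.1]{gms} and only records the two places where the exponent $2$ becomes $q$, namely the threshold $m>1+(2n/\varepsilon)^{q}/n$ and the identity $\Vert P_N(y_0^m)\Vert^{q}=\sum_{j,i} \vert y_0^m(t_i^j)\vert^{q}$. That argument is constructive: one fixes a point of $C$, forms an explicit convex combination $y_0^m$ along finitely many branches over $m$ levels, and reads off a concrete weak neighbourhood and a level $N$ at which the $\ell_q$-mass is small by pigeonhole.

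Your blow-up is correct once $\{t_1,\dots,t_K\}$ is an antichain (a minor point: the remark after Lemma~\ref{normantejt} computes dual norms of $\sum\lambda_i f_{S_i}$, not $JT_\infty^p$-norms, so you should justify $\Vert x_K\Vert=1$ directly from the definition). You have rightly isolated the incomparability step as the hard part; unfortunately it is a real gap, not a routine detail. Each earlier $t_j$ has infinitely many descendants at every level of $T_\infty$, so $L_{n_k}\cap D_{k-1}$ is infinite. A refinement of $U_{k-1}$ by finitely many $JT_\infty^p$-functionals controls only finitely many linear combinations of coordinates and cannot force $\vert y(e_t)\vert<\varepsilon/2$ uniformly over all $t\in L_{n_k}\cap D_{k-1}$ and all $y$ in a nonempty weakly open set, however large $n_k$ is chosen. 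In fact, what you need --- a weakly open $U'\subseteq U_{k-1}$ and a level $n_k$ with $\sup_{y\in U'}\sup_{t\in L_{n_k}\cap D_{k-1}}\vert y(e_t)\vert<\varepsilon/2$ --- is essentially the present lemma applied to $P_{D_{k-1}}^*(U_{k-1})$, so the scheme is circular as it stands. The deferral to \cite[Theorem~2.2]{gms} does not help: the pruning in that proof is powered precisely by their Lemma~2.1, which is the statement you are trying to prove here. Either supply an independent mechanism to push the witness off $D_{k-1}$, or follow the direct averaging argument of \cite[Lemma~2.1]{gms} with $q$ in place of $2$.
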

\begin{proof}
    The proof is verbatim that of \cite[Lemma 2.1]{gms} choosing $m>1+(\frac{2n}{\varepsilon})^{q}\frac{1}{n}$, for $\frac{1}{p}+\frac{1}{q}=1$, and using that, in the notation of that proof, we have
    $$\left\Vert P_N(y_0^m) \right\Vert^{q}=\sum_{j=1}^{m-1}\sum_{i=1}^n \vert y_0^m(t_i^j)\vert^{q}.$$
\end{proof}

Now the following result holds.

\begin{theorem}\label{theo:cpcp}
    For every $1<p<\infty$ the space $B_\infty^p$ has the CPCP.
\end{theorem}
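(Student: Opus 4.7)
The plan is to follow the Ghoussoub--Maurey--Schachermayer strategy of \cite[Theorem 2.2]{gms}, which is written for $p=2$, and adapt it to arbitrary $1<p<\infty$ by replacing $\ell^2$ quantities with $\ell^q$ quantities ($1/p+1/q=1$) throughout, exploiting the computations already encoded in Lemma \ref{normantejt}, Proposition \ref{prop:acocompleta}, Proposition \ref{prop:limpnpredual}, Corollary \ref{coro:finalproof}, and Lemma \ref{lemma:cpcp}. Fix a non-empty closed bounded convex subset $C\subseteq B_\infty^p$ and $\varepsilon>0$; I want to produce a relatively weakly open subset of $C$ of diameter less than $\varepsilon$.

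First, I iterate Lemma \ref{lemma:cpcp} to build a decreasing chain $U_1\supseteq U_2\supseteq\cdots$ of non-empty relatively weakly open convex subsets of $C$ together with an increasing sequence of levels $N_1<N_2<\cdots$ and a rapidly decaying sequence $\varepsilon_k\downarrow 0$ such that $\|P_{N_k}(U_k)\|_\infty<\varepsilon_k$. Having constructed $U_k$ and $N_k$, I apply Lemma \ref{lemma:cpcp} to $\overline{U_k}$ with parameters $M=N_k$ and $\varepsilon_{k+1}$, obtaining $V_{k+1}$ and $N_{k+1}>N_k$; then $U_{k+1}:=V_{k+1}\cap U_k$ is still non-empty and relatively weakly open in $C$ because $U_k$ is weakly dense in $\overline{U_k}$. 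Using $w^*$-compactness of $\overline{U_1}^{w^*}\subseteq (JT_\infty^p)^*$, the decreasing family $\overline{U_k}^{w^*}$ has a common point $y_0^*$; the uniform estimate $\|P_{N_k}(y_0^*)\|_\infty<\varepsilon_k$ forces $\liminf\|P_n^*(y_0^*)\|_\infty=0$, hence $y_0^*\in B_\infty^p$ by Proposition \ref{prop:limpnpredual}.

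The heart of the argument is to bound $\|y^*-y_0^*\|$ for arbitrary $y^*\in U_K$ once $K$ is large. Since both $y^*$ and $y_0^*$ lie in $B_\infty^p$, so does $z^*:=y^*-y_0^*$, and $|z^*(e_t)|<2\varepsilon_k$ for every $t\in L_{N_k}$ and every $k\leq K$. By Proposition \ref{prop:acocompleta} applied in the predual, $z^*$ is the norm limit of the telescoping series $\sum_k P_{N_k+1}^{N_{k+1}}(z^*)$, and Corollary \ref{coro:finalproof} rules out any fixed lower bound on these block norms. Combining this with a direct estimate on $\|P_{N_k+1}^{N_{k+1}}(z^*)\|$ obtained from the pointwise control at levels $N_k$ and $N_{k+1}$ (using the $\ell^q$-characterization of the dual norm given by the remark after Lemma \ref{normantejt}) yields $\|z^*\|<\varepsilon$ for $K$ large enough. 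Consequently $U_K\subseteq B(y_0^*,\varepsilon)$, so $U_K$ has diameter at most $2\varepsilon$, which suffices up to rescaling $\varepsilon$.

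The main technical obstacle will be the last estimate: passing from the pointwise smallness $\|P_{N_k}(U_k)\|_\infty<\varepsilon_k$ at discrete levels to the global norm smallness of $z^*$ requires bounding the mass of $z^*$ inside each slab $L_{N_k+1}\cup\cdots\cup L_{N_{k+1}}$ in the dual norm $\|\cdot\|_{(JT_\infty^p)^*}$, not just in the sup norm of the coefficients. This is exactly the place where the duality $\ell^p\leftrightarrow\ell^q$ between the segment-norm on $JT_\infty^p$ and the molecule norm must be exploited carefully, together with Corollary \ref{coro:finalproof} to prevent the block norms from being bounded away from zero along any subsequence; this is the only step where the adaptation from $p=2$ to general $1<p<\infty$ really needs to be re-examined.
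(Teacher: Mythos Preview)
Your outline follows the broad shape of the Ghoussoub--Maurey--Schachermayer argument, but it omits the one ingredient that the paper singles out as essential, and the substitute you propose does not work. The paper's proof (following \cite{gms}) hinges on the fact that for each pair $n<m$ the range $(P_n^m)^*(B_\infty^p)$ is isomorphic to $\ell_q$, hence reflexive, hence has the CPCP. This is used inside the inductive construction: after Lemma~\ref{lemma:cpcp} produces $U_k$ and $N_k$ with $\|P_{N_k}(U_k)\|_\infty$ small, one refines $U_k$ further by pulling back a small-diameter weakly open set of the \emph{slab image} $(P_{N_{k-1}+1}^{N_k})^*(\overline{U_k})$, so that the diameter of the projection onto each slab is controlled in the genuine dual norm, not merely in the $\ell_\infty$ norm of the coefficients at the boundary levels. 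Without this refinement step the construction does not give any norm control on the slabs.

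Your third paragraph tries to replace this by a ``direct estimate on $\|P_{N_k+1}^{N_{k+1}}(z^*)\|$ obtained from the pointwise control at levels $N_k$ and $N_{k+1}$''. That estimate is false in general: knowing that $|z^*(e_t)|$ is small for $t\in L_{N_k}\cup L_{N_{k+1}}$ says nothing about $\|P_{N_k+1}^{N_{k+1}}(z^*)\|_{(JT_\infty^p)^*}$, since $z^*$ can carry arbitrary $\ell_q$ mass on the interior levels of the slab. Two further inaccuracies compound this: Proposition~\ref{prop:acocompleta} concerns the basis $\{e_t\}$ on the $JT_\infty^p$ side, not the predual, so invoking it ``in the predual'' to get the telescoping decomposition is misplaced; and Corollary~\ref{coro:finalproof} is a statement about a \emph{single} functional with $\liminf\|P_n^*(y^*)\|=0$ and cannot by itself yield uniform smallness over all $y^*\in U_K$. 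In the actual argument the corollary is used once, at the end, to derive a contradiction for a specific limit point, after the CPCP of the $\ell_q$ slabs has already forced the block norms to be summable. Reinstate that step and your plan becomes the paper's proof.
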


\begin{proof}
    The proof follows following word-by-word the proof of \cite[Theorem 2.2]{gms}, taking into account that $(P_n^m)^*(B_\infty)$ is isomorphic to $\ell_{q}$, for $\frac{1}{p}+\frac{1}{q}=1$, which has the CPCP by the reflexivity. At the end of the proof entail a contradiction with Corollary \ref{coro:finalproof}.
\end{proof}

\section{Slices in $JT_\infty^p$}\label{section:predualesbuenpropi}

The aim of this section is to describe the geometric properties of the slices of $B_{B_\infty^p}$. Let us prove the following.

\begin{theorem}\label{theo:slicesbinfp} Let $1<p<\infty$ and let $q$ such that $\frac{1}{p}+\frac{1}{q}=1$. The following holds.
\begin{enumerate}
    \item $B_\infty^p$ has the r-BSP.
    \item Every slice of $B_\infty^p$ has diameter greater than or equal to $2^\frac{1}{q}$.
    \item For every $\varepsilon>0$ there exists a slice of $B_\infty^p$ such that $\diam(S)\leq 2^\frac{1}{q}+\varepsilon.$
\end{enumerate}
\end{theorem}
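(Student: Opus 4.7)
My proof will handle parts (1) and (2) together with a common molecule-extension construction, and (3) separately by adapting \cite[Theorem 3.7]{hllnr}.

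For (1) and (2), fix a slice $S = S(B_{B_\infty^p}, x, \alpha)$ with $\|x\|=1$. Lemma~\ref{normantejt} supplies a norming molecule $y^* = \sum_{i=1}^n \lambda_i f_{S_i}$ with $\sum_i|\lambda_i|^q=1$ and $y^*(x) > 1 - \alpha/2$, so $y^* \in S$. Since every element of $T_\infty$ has infinitely many successors and $\sum_t|x(t)|^p \leq \|x\|^p$ (each singleton being a segment), for any $\delta > 0$ and each maximum $t_i$ of $S_i$ I can choose two distinct successors $t_i^{(1)}, t_i^{(2)}$ of $t_i$, with all $2n$ nodes pairwise distinct and $|x(t_i^{(j)})| < \delta$. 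The extended molecules $y^*_{(j)} := \sum_i \lambda_i f_{S_i \cup \{t_i^{(j)}\}}$ retain norm $1$ (remark after Lemma~\ref{normantejt}) and, for $\delta$ small, lie in $S$. The difference $y^*_{(1)} - y^*_{(2)} = \sum_i \lambda_i(e_{t_i^{(1)}} - e_{t_i^{(2)}})$ is supported on $2n$ pairwise disjoint singletons with coefficient vector $(\lambda_i, -\lambda_i)_{i=1}^n$, so again by the remark $\|y^*_{(1)} - y^*_{(2)}\| = (2\sum_i|\lambda_i|^q)^{1/q} = 2^{1/q}$, giving $\diam(S) \geq 2^{1/q}$ and proving~(2).

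For (1), given any $w \in B_\infty^p$, I further approximate $w$ by a finitely supported $\tilde w$ with $\|w-\tilde w\|<\delta$ (by density of $\operatorname{span}\{e_t\}$ in $B_\infty^p$) and select the $t_i^{(1)}$ outside $\operatorname{supp}(\tilde w)$. I then test $y^*_{(1)} - w$ on the vector $v := \sum_i |\lambda_i|^{q-1}\operatorname{sgn}(\lambda_i)\, e_{t_i^{(1)}} \in JT_\infty^p$: since the $\{t_i^{(1)}\}$ are pairwise incomparable, any segment contains at most one of them, so $\|v\|$ equals the $\ell_p$-norm of its coefficient vector, and the identity $(q-1)p = q$ yields $\|v\| = (\sum_i|\lambda_i|^q)^{1/p} = 1$. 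A direct computation gives $y^*_{(1)}(v) = \sum_i|\lambda_i|^q = 1$ and $\tilde w(v) = 0$, so $|w(v)| \leq \delta$. Therefore $\|y^*_{(1)} - w\| \geq 1 - \delta$, and since $\delta > 0$ is arbitrary, $r(S) \geq 1$, i.e., the r-BSP holds.

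For (3), I adapt the branch construction of \cite[Theorem 3.7]{hllnr}, which gives the constant $\sqrt{2}$ in the case $p=2$. Fix an infinite branch $(\alpha_k)_{k\geq 1}$ of $T_\infty$ and a positive probability sequence $(c_k)$ with suitably fast decay, and set $x := \sum_k c_k e_{\alpha_k}$; its norm equals $1$ since along a single branch any $\ell_p$-sum of disjoint sub-segment sums is dominated by the full single-segment sum. For small $\alpha$, the condition $\sum_k c_k y_{\alpha_k} > 1 - \alpha$ together with $|y_{\alpha_k}| \leq 1$ forces $y_{\alpha_k} \approx 1$ on most of the $c_k$-weighted branch, so every $y$ in the slice $S(B_{B_\infty^p}, x, \alpha)$ is close in norm to a ``segment functional on the initial portion of the branch plus a small transverse perturbation.'' Tracking how disjoint segment families split between the branch and the transverse directions via the $\ell_q$-duality remark — the same mechanism that produced $2^{1/q}$ in~(2) — then yields the upper bound $\diam(S) \leq 2^{1/q} + \varepsilon$. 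The principal obstacle is exactly this quantitative segment-bookkeeping: the $p=2$ proof uses Hilbertian orthogonality and must be redone with Hölder and $\ell_p$-$\ell_q$-duality, with special care for segments that partially follow the branch before fanning out transversally. Because the matching constants from~(2) and~(3) are forced to be $2^{1/q}$ by the duality structure, the remaining work is precise technical estimation rather than a conceptual one.
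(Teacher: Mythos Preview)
Your arguments for (1) and (2) are correct and essentially coincide with the paper's: both extend a molecule in the slice by adjoining successors to the top nodes of its segments and then test on a suitable $\ell_p$-normalized vector supported on those successors. The only cosmetic difference is that the paper passes to the bidual via Proposition~\ref{prop:propibidual} and works with $w^*$-slices of $B_{(JT_\infty^p)^*}$, whereas you work directly in $B_\infty^p$; your use of $\sum_t|x(t)|^p\leq\Vert x\Vert^p$ to locate successors with small $x$-value neatly replaces the paper's reduction to finitely supported $x$. (One notational slip: in your difference $y^*_{(1)}-y^*_{(2)}$ you should write $f_{\{t_i^{(j)}\}}$ or $e_{t_i^{(j)}}^*$, not $e_{t_i^{(j)}}$.)

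Part (3), however, is not a proof but a programme. You propose to adapt the infinite-branch construction of \cite[Theorem~3.7]{hllnr} and explicitly flag the ``quantitative segment-bookkeeping'' as an obstacle you have not carried out; replacing Hilbertian orthogonality by H\"older estimates here is genuinely delicate, because an arbitrary element of the slice need not be a molecule and segments may overlap the branch on a long initial portion before leaving it. The paper sidesteps this entirely with a much simpler device: it takes the slice $S(B_{(JT_\infty^p)^*},e_\emptyset,\alpha)$ defined by the \emph{single} vector $e_\emptyset$ and estimates distances only between \emph{molecules} in this slice. For a molecule $x^*=\sum_i\lambda_i f_{S_i}$ the condition $x^*(e_\emptyset)>1-\alpha$ forces $\lambda_1>1-\alpha$ for the unique segment $S_1\ni\emptyset$, hence $\sum_{i\geq 2}|\lambda_i|^q<1-(1-\alpha)^q$, so $x^*$ is within $\alpha+(1-(1-\alpha)^q)^{1/q}$ of $f_{S_1}$. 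Two segments $S_1,T_1$ both containing $\emptyset$ satisfy $f_{S_1}-f_{T_1}=f_{S_1\setminus T_1}-f_{T_1\setminus S_1}$ with $S_1\setminus T_1$ and $T_1\setminus S_1$ disjoint segments, so $\Vert f_{S_1}-f_{T_1}\Vert=2^{1/q}$ exactly. This bounds the diameter of $S\cap M$ by $2^{1/q}+o(1)$, and the paper then invokes \cite[Theorem~1.1]{blr15} to pass from the norming set $M$ to the full $w^*$-slice. You should either carry out your branch estimate in full or, more economically, switch to the $e_\emptyset$ argument.
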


For the proof we will make use of the following proposition, which relates the diameter and the radius of a slice $S(B_X,f,\alpha)$ with that of the weak-star slice $S(B_{X^{**}},f,\alpha)$. This will be useful in our context in order to work in $(JT_\infty^p)^*=(B_\infty^p)^{**}$, where we will be able to take advantage of Lemma \ref{normantejt}. The proof relies on an easy argument using Goldstine's theorem and the weak$^*$ lower semi-continuity of the norm.

\begin{proposition}\label{prop:propibidual}
    Let $X$ be a Banach space, $f\in S_{X^*}$ and $\alpha>0$. The following assertion hold:
    \begin{enumerate}
        \item $\diam(S(B_X,f,\alpha))=\diam(S(B_{X^{**}},f,\alpha))$.
        \item $r(S(B_X,f,\alpha))\geq r(S(B_{X^{**}},f,\alpha))$.
    \end{enumerate}
\end{proposition}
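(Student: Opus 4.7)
The plan is to handle (1) by a Goldstine-type simultaneous approximation argument and (2) by the $w^*$-lower semicontinuity of the distance to a fixed point of $X$. Throughout, identify $X$ with its canonical image in $X^{**}$, so that $S(B_X,f,\alpha) \subseteq S(B_{X^{**}},f,\alpha)$, which already yields the trivial inequality $\diam(S(B_X,f,\alpha))\leq \diam(S(B_{X^{**}},f,\alpha))$.

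For the reverse inequality in (1), I would fix $\varepsilon>0$ and arbitrary $x^{**},y^{**}\in S(B_{X^{**}},f,\alpha)$. Choose $g\in B_{X^*}$ with $(x^{**}-y^{**})(g)>\Vert x^{**}-y^{**}\Vert -\varepsilon$. Since $f(x^{**})>1-\alpha$ and $f(y^{**})>1-\alpha$, pick $\eta>0$ so small that $f(x^{**})-\eta>1-\alpha$ and $f(y^{**})-\eta>1-\alpha$. By Goldstine's theorem, the $w^*$-neighbourhood of $x^{**}$ determined by $\{f,g\}$ with tolerance $\min\{\eta,\varepsilon\}$ meets $B_X$, so we can pick $x\in B_X$ with $f(x)>1-\alpha$ and $g(x)>x^{**}(g)-\varepsilon$; analogously choose $y\in B_X$ with $f(y)>1-\alpha$ and $g(y)<y^{**}(g)+\varepsilon$. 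Then $x,y\in S(B_X,f,\alpha)$ and
$$\Vert x-y\Vert\geq g(x-y)>(x^{**}-y^{**})(g)-2\varepsilon>\Vert x^{**}-y^{**}\Vert-3\varepsilon.$$
Letting $\varepsilon\to 0$ and taking suprema gives $\diam(S(B_X,f,\alpha))\geq \diam(S(B_{X^{**}},f,\alpha))$.

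For (2), I would show the stronger statement that every admissible center in $X$ for $S(B_X,f,\alpha)$ is also admissible, viewed in $X^{**}$, for $S(B_{X^{**}},f,\alpha)$. Assume $S(B_X,f,\alpha)\subseteq B(x_0,r)$ with $x_0\in X$, and take $y^{**}\in S(B_{X^{**}},f,\alpha)$. By Goldstine there is a net $(y_i)\subseteq B_X$ with $y_i\xrightarrow{w^*}y^{**}$; since $f(y^{**})>1-\alpha$, eventually $f(y_i)>1-\alpha$, so $y_i\in S(B_X,f,\alpha)$ and $\Vert y_i-x_0\Vert\leq r$. The closed ball $B(x_0,r)$ in $X^{**}$ is $w^*$-closed, so $\Vert y^{**}-x_0\Vert_{X^{**}}\leq r$. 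Hence $x_0\in X\subseteq X^{**}$ witnesses $S(B_{X^{**}},f,\alpha)\subseteq B(x_0,r)$ in $X^{**}$, giving $r(S(B_{X^{**}},f,\alpha))\leq r$, and taking infimum over admissible $r$ yields (2).

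The only delicate point is in (1), where the Goldstine approximation must be controlled against two functionals simultaneously, namely $f$ (to remain in the slice) and $g$ (to almost recover the distance); this is standard because the $w^*$-topology on $X^{**}$ is generated by finite sets of functionals, so no real obstacle arises. Note that (2) cannot generally be strengthened to an equality, since allowing centers in $X^{**}$ provides strictly more freedom than centers in $X$.
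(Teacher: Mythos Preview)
Your proof is correct and follows essentially the same approach as the paper: Goldstine's theorem combined with the $w^*$-lower semicontinuity of the bidual norm. The only cosmetic differences are that in (1) you make the almost-norming functional $g$ explicit whereas the paper invokes lower semicontinuity of the norm directly, and in (2) you argue contrapositively (admissible centers in $X$ remain admissible in $X^{**}$) whereas the paper shows directly that no $x\in X$ can be a center of radius below $r(S(B_{X^{**}},f,\alpha))$; both are the same argument in different clothing.
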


Now we are able to provide the pending proof.

\begin{proof}[Proof of Theorem \ref{theo:slicesbinfp}]
In view of Proposition \ref{prop:propibidual} we will prove that $(B_\infty^p)^{**}=(JT_\infty^p)^*$ satisfies that every weak-star slice has radius $1$, diameter bigger than or equal to $2^{\frac{1}{q}}$ and satisfying that, for every $\varepsilon>0$, there exists a weak-star slice of the unit ball of diameter smaller than $2^\frac{1}{q}+\varepsilon$.

Let us start by proving (1). Let $x^*\in (JT_\infty^*)^*$, $\varepsilon>0$ and a $w^*$-slice $S=S(B_{(JT_\infty^p)^*}, x,\alpha)$, and let us find $y^*\in S$ such that $\Vert x^*-y^*\Vert\geq 1-\varepsilon$. We can assume by a density argument that $x$ is finitely supported. Since $M$ is norming we get $M\cap S\neq \emptyset$, in other words, there exists $g:=\sum_{i=1}^n \lambda_i f_{S_i}\in M$ with $g\in S$. Find $(\mu_1,\ldots, \mu_n)\in \ell_p^n$ such that $\sum_{i=1}^n \lambda_i \mu_i=1$. For every $i\in\{1,\ldots, n\}$ let $t_i\in S_i$ be the node of maximal level. Fix an arbitrary $i$. Observe that, if we write $(u_k)_{k\in\mathbb N}$ the colection of succesors of $t_i$, it is clear that, for every $j\in\mathbb N$ and any collection $\{\alpha_1,\ldots, \alpha_j\}\subseteq \mathbb R$ then
$$\left\Vert \sum_{i=1}^j \alpha_i e_{u_i}\right\Vert=\left(\sum_{i=1}^j \vert \alpha_i\vert^p \right)^\frac{1}{p}.$$
This in particular implies that, if we call $(f_k)$ the canonical basis of $\ell_p$, the mapping $\phi:\ell_p\longrightarrow JT_\infty^p$ such that $\phi(f_k)=e_{u_k}$, for all $k\in\mathbb N$, is an isometry and, in particular, it is weakly null because of the weak-to-weak continuity of the above operator. Consequently, we can find a successor $u_n$ of $t_i$ such that $x^*(e_{u_n})$ is as small as we wish and such that $u_n\notin \supp(x)$. This proves that, for every $i$, we can find a successor $u_i$ of $t_i$ large enough to get $x^*(\sum_{i=1}^n \mu_i e_{u_i})<\varepsilon$ and such that $x(u_i)=0$ holds for every $i$, from where we conclude that $y^*:=\sum_{i=1}^n \lambda_i f_{S_i\cup\{u_i\}}\in S$. Now consider $z:=\sum_{i=1}^n \mu_i e_{u_i}$, and it is clear that $\Vert z\Vert=1$ since $\{u_1,\ldots, u_n\}$ are respective successors of elements which were uncomparable. Consequently
$$\Vert y^*-x^*\Vert\geq y^*(z)-x^*(z)=\sum_{i=1}^n \lambda_i \mu_i-\varepsilon=1-\varepsilon.$$
This proves (1).

(2) Take a $w^*$-slice $S$ of $B_{(JT_\infty^p)^*}$. As before we can find $g=\sum_{i=1}^n \lambda_i f_{S_i}\in M$ with $g\in S$. We can assume that $S=S(B_{(JT_\infty^p)^*}, x,\alpha)$ and that $x$ is finitely supported. Call $t_i\in S_i$ the element of maximal level. Observe that, since there are infinitely many successors of $t_i$ and $x$ is finitely support we can find, for every $i$, two different successors $u_i$ and $v_i$ of $t_i$ such that the elements $x^*=\sum_{i=1}^n \lambda_i f_{S_i\cup\{u_i\}}, y^*=\sum_{i=1}^n \lambda_i f_{S_i\cup\{v_i\}}$ belong to $S$. Our aim is now to prove that $\Vert x^*-y^*\Vert\geq 2^\frac{1}{q}$. In order to do so, find $(\mu_1,\ldots, \mu_n)\in S_{\ell_p^n}$ such that $\sum_{i=1}^n \lambda_i\mu_i=1$ and define $z:=\sum_{i=1}^n \mu_i(e_{u_i}-e_{v_i})$. Observe that, since $\{u_1,v_1,\ldots, u_n,v_n\}$ are different eachother since the segments $S_1,\ldots, S_n$ were pairwise disjoint we derive that $\Vert z\Vert\leq \left(\sum_{i=1}^n 2\vert \mu_i\vert^p\right)^\frac{1}{p}=2^\frac{1}{p}$. Consequently we have
$$\Vert x^*-y^*\Vert 2^\frac{1}{p}\geq (x^*-y^*)(z)=\sum_{i=1}^n \lambda_i (\mu_i+\mu_i)=2\sum_{i=1}^n \lambda_i\mu_i=2,$$
so $\Vert x^*-y^*\Vert\geq 2^{1-\frac{1}{p}}=2^\frac{1}{q}$.

Finally let us prove (3). In order to do so, let $\alpha>0$, and consider $x^*,y^*\in S(B_{(JT_\infty^p)^*},e_\emptyset, \alpha)\cap M$, and let us estimate $\Vert x^*-y^*\Vert$. In order to do so, we write $x^*=\sum_{i=1}^n \lambda_i f_{S_i}$
 and $y^*=\sum_{j=1}^m \mu_j f_{T_i}$, for suitable families of pairwise disjoint segments $\{S_1,\ldots, S_n\}$ and $\{T_1,\ldots, T_m\}$ of $T_\infty$. Note that there exists just one $S_i$ such that $\emptyset\in S_i$ (we can assume with no loss of generality $i=1$). Analogously, assume $\emptyset\in T_1$ and $\emptyset\notin T_j$ for $j\geq 2$. Now
 $$1-\alpha<x^*(e_{\emptyset})=\lambda_1.$$
 Since $\sum_{i=1}^n \vert \lambda_i\vert^q\leq 1$ we infer that $\sum_{i=2}^n \vert \lambda_i\vert^q<1-\vert\lambda_1\vert^p\leq 1-(1-\alpha)^q$. In a similar way $\mu_1>1-\alpha$ and thus $\sum_{j=2}^m \vert \mu_i\vert^q<1-(1-\alpha)^q$.

Consequently $\Vert x^*-f_{S_1}\Vert\leq \vert 1-\lambda_1\vert \Vert f_{S_1}\Vert+\left\Vert \sum_{i=2}^n \lambda_i f_{S_i}\right\Vert\leq \alpha+\left(\sum_{i=2}^n \vert\lambda_i\vert^q \right)^\frac{1}{q}\leq \alpha+ (1-(1-\alpha)^q)^\frac{1}{q}$. Similarly we have $\Vert y^*-f_{T_1}\Vert\leq \alpha+ (1-(1-\alpha)^q)^\frac{1}{q}$.

Now let us estimate $\Vert f_{S_1}-f_{T_1}\Vert$. Both are segments that contain $\emptyset$, so $S_i=S_1\cap T_1\cup S_i\setminus T_i$ for $i=1,2$. This means that $f_{S_i}=f_{S_1\cap T_1}+f_{S_i\setminus T_i}$ and henceforth $f_{S_1}-f_{T_1}=f_{S_1\setminus T_1}-f_{T_1\setminus S_1}$. Since $S_1\setminus T_1$ and $T_1\setminus S_1$ are disjoint segments we get that $\Vert f_{S_1}-f_{T_1}\Vert=\Vert f_{S_1\setminus T_1}-f_{T_1\setminus S_1}\Vert=2^\frac{1}{q}$. Thus
$$\Vert x^*-y^*\Vert\leq 2^\frac{1}{q}+2(\alpha+(1-(1-\alpha)^q)^\frac{1}{q}).$$
Since $2(\alpha+(1-(1-\alpha)^q)^\frac{1}{q})\rightarrow 0$ when $\alpha\rightarrow 0$ and by \cite[Theorem 1.1]{blr15} we have that $\inf_{\alpha>0} \diam(S(B_{(JT_\infty^p)^*},e_\emptyset,\alpha)=2^\frac{1}{q}$, and the theorem is proved. \end{proof}

The above theorem proves that, for every $\varepsilon>0$, there exists a Banach space $X$ such that $1=r(X)\leq d(X)\leq 1+\varepsilon$. Now we are ready to prove Theorem \ref{theo:counterbestia} by providing a Banach space $X$ satisfying that $r(X)=1=d(X)$, which is a negative answer to the open question by Ivakhno in \cite{ivakhno} in an extreme way.

\begin{proof}[Proof of Theorem \ref{theo:counterbestia}]
By Theorem \ref{theo:slicesbinfp} we have that, for every $n\in\mathbb N$, there exists a Banach space $X_n$ satisfying that $1\leq r(X_n)\leq d(X_n)\leq 1+\frac{1}{n}$ holds for every $n\in\mathbb N$. Write
$$X:=(\oplus_{n=1}^\infty X_n)_1.$$
We have that $r(X)\geq 1$ by \cite[Theorem 3]{ivakhno}.

In order to prove that $d(X)=1$ take $\varepsilon>0$ and let us find a slice $S$ of $B_X$ with $\diam(S)<1+\varepsilon$. In order to do so, pick $n\in\mathbb N$ with $\frac{1}{n}<\frac{\varepsilon}{4}$. Now there exists a slice $S(B_{X_n}, x_n^*,\alpha)$ of diameter smaller than or equal to $1+\frac{\varepsilon}{2}$. Since slices decrease with the index, there is no loss of generality in assuming that $\alpha<\frac{\varepsilon}{4}$. Now consider the slice
$$S=S(B_X, (\underbrace{0,0,...,0}_{n-1},x_n^*,0,0,\ldots ),\alpha).$$
Let us prove that $\diam(S)\leq 1+\varepsilon$. To do so, take $(x_k),(y_k)\in S$. We have $1-\alpha<x_n^*(x_n)\leq \Vert x_n\Vert$. This means, on the one hand, that $x_n\in S(B_{X_n},x_n^*,\alpha)$. On the other hand, since $1\geq \Vert (x_k)\Vert=\sum_{k=1}^\infty \Vert x_k\Vert$ we infer $\sum_{k\neq n} \Vert x_k\Vert<\alpha$. In a similar way, we have that $y_n\in S(B_{X_n}, x_n^*,\alpha)$ and that $\sum_{k\neq n} \Vert y_k\Vert<\alpha$. Now $\Vert x_n-y_n\Vert\leq 1+\frac{\varepsilon}{2}$ because $\diam(S(B_{X_n}, x_n^*,\alpha))<1+\frac{\varepsilon}{2}$. Then
\[\begin{split}
\Vert (x_k)-(y_k)\Vert=\sum_{k=1}^\infty \Vert x_k-y_k\Vert& =\Vert x_n-y_n\Vert+\sum_{k\neq n}\Vert x_k-y_k\Vert\\
\ & \leq 1+\frac{\varepsilon}{2}+\sum_{k\neq n}\Vert x_k\Vert+\sum_{k\neq n}\Vert y_k\Vert\\
& \leq 1+\frac{\varepsilon}{2}+2\alpha.
\end{split}\]
Since $(x_k)$ and $(y_k)$ were arbitrary we get that $\diam(S)\leq 1+\frac{\varepsilon}{2}+2\alpha<1+\varepsilon$ and the proof is complete.
\end{proof}

Another consequence of the construction of the spaces is the following.

\begin{theorem}\label{theo:examCPCPslicegrand}
For every $\varepsilon>0$ there exists a Banach space $X$ with the CPCP such that every slice of the unit ball has diameter at least $2-\varepsilon$.
\end{theorem}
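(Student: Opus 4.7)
The plan is to exhibit the space as a suitable member of the family $B_\infty^p$ already constructed, exploiting the fact that the lower bound $2^{1/q}$ on the diameters of slices in Theorem \ref{theo:slicesbinfp}(2) tends to $2$ as $q\to 1^+$, and that the CPCP has already been established for the entire family in Theorem \ref{theo:cpcp}.

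Given $\varepsilon>0$, I would first choose $p\in(1,\infty)$ large enough so that the conjugate exponent $q$, determined by $\frac{1}{p}+\frac{1}{q}=1$, satisfies
\[
2^{\frac{1}{q}}\;\geq\;2-\varepsilon.
\]
This is possible because $q=\frac{p}{p-1}\to 1$ as $p\to\infty$, whence $\frac{1}{q}\to 1$ and $2^{1/q}\to 2$. With this choice of $p$, set
\[
X:=B_\infty^p.
\]

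By Theorem \ref{theo:cpcp}, the space $X$ enjoys the CPCP. By Theorem \ref{theo:slicesbinfp}(2), every slice $S$ of $B_X$ satisfies $\diam(S)\geq 2^{1/q}\geq 2-\varepsilon$. Hence $X$ is a Banach space with the CPCP such that every slice of its unit ball has diameter at least $2-\varepsilon$, as required.

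Since both ingredients (the CPCP of $B_\infty^p$ and the sharp lower bound $2^{1/q}$ for the diameters of its slices) are already available from earlier sections, there is no further obstacle; the whole point is to notice that the exponent $q$ serves here as a tuning parameter allowing the lower bound $2^{1/q}$ to approach $2$ arbitrarily closely.
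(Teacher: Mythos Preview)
Your proof is correct and follows exactly the same approach as the paper: choose $X=B_\infty^p$ with $p$ large enough that $2^{1/q}\geq 2-\varepsilon$, invoke Theorem~\ref{theo:cpcp} for the CPCP, and Theorem~\ref{theo:slicesbinfp}(2) for the lower bound on slice diameters. (The paper's proof mistakenly cites Theorem~\ref{theo:counterbestia} at the end, but clearly intends Theorem~\ref{theo:slicesbinfp}(2), as you correctly identified.)
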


\begin{proof}
Given $\varepsilon>0$ select $X=B_\infty^p$ for $1<p<\infty$ with $2^\frac{1}{q}>2-\varepsilon$, where $\frac{1}{p}+\frac{1}{q}=1$. Then $X$ has the CPCP by Theorem \ref{theo:cpcp} but every slice of the unit ball has diameter at least $2^\frac{1}{q}>2-\varepsilon$ in virtue of Theorem \ref{theo:counterbestia}
\end{proof}

\begin{remark}
After the proof of Corollary 2.6 in \cite{blr15eje1} it is asked whether there exists a Banach space $X$ with the PCP and such that every slice of $B_X$ has diameter exactly 2. The same question replacing the PCP with CPCP remains open to the best of the author knowledge. 
\end{remark}

\section{Renormings of Daugavet spaces}\label{section:renoweakopen}

The aim of this section is to prove Theorem \ref{theo:counternegadauga}, which in particular gives a negative answer to \cite[Question (b)]{hllnr}. The whole section will be a description of the renorming technique.

Let $X$ be a Banach space with the Daugavet property. Select $x_0\in S_X$ and take $f\in S_{X^*}$ such that $f(x_0)=1$. Now take $\delta>0$ and define
$$B_\delta:=\overline{\co}(B_X\cup\{(1+\delta)x_0\}\cup\{-(1+\delta)x_0\}).$$
It is immediate that $B_X\subseteq B_\delta\subseteq (1+\delta)B_X$. Moreover, observe that $\sup_{z\in B_\delta} f(z)=1+\delta$. Indeed, given $z\in B_\delta$ we can find $x\in B_X$ and $\alpha,\beta,\gamma\in [0,1]$ with $\alpha+\beta+\gamma=1$ and such that $z=\alpha x+\beta(1+\delta) x_0-\gamma (1+\delta) x_0$. Now
\[
\begin{split}
    f(z)=\alpha f(x)+\beta f((1+\delta)x_0)-\gamma f((1+\delta)x_0)& \leq \alpha+(\beta-\gamma)(1+\delta)\\
    & \leq \alpha+ \beta(1+\delta)\leq 1+\delta\\
    & =f((1+\delta)x_0).
\end{split}
\]
Now we have the following result.

\begin{proposition}\label{prop:diamslicesrenormingdauga}
Given $0<r<\delta$ it follows:
$$\diam(\{z\in B_\delta: f(z)>1+\delta-r\})\leq \frac{3r(1+\delta)}{\delta}.$$
\begin{proof}
Let $z,z'\in \{z\in B_\delta: f(z)>1+\delta-r\}$ and, as before, find $x,x'\in B_X$ and $\alpha, \alpha',\beta,\beta',\gamma,\gamma'\in [0,1]$ with $\alpha+\beta+\gamma=\alpha'+\beta'+\gamma'=1$ and such that $z=\alpha x+(\beta-\gamma)(1+\delta)x_0$ and that $z'=\alpha' x'+(\beta'-\gamma')(1+\delta)x_0$.
Since $f(z)>1+\delta-r$ we infer
\[
\begin{split}
1+\delta-r<f(z)=\alpha f(x)+(\beta-\gamma)(1+\delta)\leq 1+\beta \delta.
\end{split}
\]
Thus $\beta>1-\frac{r}{\delta}$. Since $\alpha+\beta+\gamma=1$ it is clear that $\alpha+\gamma<\frac{r}{\delta}$. In a similar way it is proved that $\beta'>1-\frac{r}{\delta}$ and consequently $\alpha'+\gamma'<\frac{r}{\delta}$. Hence we get $\vert \beta-\beta'\vert<\frac{r}{\delta}$ and then
\[\begin{split}
\Vert z-z'\Vert& \leq \vert \beta-\beta'\vert (1+\delta)\Vert x_0\Vert+ \alpha \Vert x\Vert+\alpha'\Vert x'\Vert+(\gamma+\gamma')(1+\delta)\\
& \leq \frac{r}{\delta}(1+\delta)+(1+\delta)(\alpha+\gamma+\alpha'+\gamma')\leq 3\frac{r}{\delta}(1+\delta),
\end{split}\]
and the proof is finished by the arbitrariness of $z$ and $z'$.
\end{proof}
\end{proposition}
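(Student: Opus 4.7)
The plan is to exploit the fact that the only way for a point $z\in B_\delta$ to make $f(z)$ close to the maximum value $1+\delta$ (which the preceding computation shows is attained at $(1+\delta)x_0$) is for $z$ itself to be close to $(1+\delta)x_0$. Concretely, I would represent $z$ as a convex combination of elements of $B_X\cup\{\pm(1+\delta)x_0\}$, show that the coefficient on $(1+\delta)x_0$ must be close to $1$, and then deduce a norm estimate via the triangle inequality.

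Since $B_\delta$ is a closed convex hull, a general $z$ in the slice need only be a \emph{limit} of elements of $\co(B_X\cup\{\pm(1+\delta)x_0\})$. This is a minor technicality: by continuity of $f$, any such $z$ is the limit of a sequence $(z_n)\subseteq \co(B_X\cup\{\pm(1+\delta)x_0\})$ with $f(z_n)>1+\delta-r$ for $n$ large, so after proving the diameter bound for pairs within the (unclosed) convex hull, the general statement follows by passing to the limit.

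The core calculation is then: writing $z=\alpha x+(\beta-\gamma)(1+\delta)x_0$ with $x\in B_X$, $\alpha,\beta,\gamma\geq 0$ and $\alpha+\beta+\gamma=1$, the bound $f(x)\leq 1$ already yields $f(z)\leq 1+\beta\delta$. The slice condition $f(z)>1+\delta-r$ then forces $\beta>1-r/\delta$, and hence $\alpha+\gamma<r/\delta$. For another point $z'=\alpha'x'+(\beta'-\gamma')(1+\delta)x_0$ in the slice, the analogous bounds give $\alpha'+\gamma'<r/\delta$ and, since $\beta,\beta'\in(1-r/\delta,1]$, also $|\beta-\beta'|<r/\delta$. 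The triangle inequality applied to the difference $z-z'$ (separating out the coefficient of $(1+\delta)x_0$, namely $\beta-\beta'+\gamma'-\gamma$) then produces
$$\|z-z'\|\leq \alpha+\alpha'+(1+\delta)|\beta-\beta'|+(1+\delta)(\gamma+\gamma'),$$
and bounding $\alpha\leq(1+\delta)\alpha$ etc.\ collapses this to $(1+\delta)(\alpha+\gamma+\alpha'+\gamma')+(1+\delta)|\beta-\beta'|\leq 3r(1+\delta)/\delta$.

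I do not anticipate a real obstacle: once one notices the crucial inequality $f(z)\leq 1+\beta\delta$, which isolates the weight on $(1+\delta)x_0$ as the one that controls $f$, the rest is bookkeeping plus the routine approximation step. If anything, the only point requiring care is keeping the constant sharp, since a naive estimate through $\|z-(1+\delta)x_0\|$ would give $4r(1+\delta)/\delta$ instead of $3r(1+\delta)/\delta$; working directly with $z-z'$ gains the factor because $|\beta-\beta'|$ is bounded by $r/\delta$ rather than by $2r/\delta$.
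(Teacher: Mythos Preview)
Your proposal is correct and follows essentially the same route as the paper: the same convex representation $z=\alpha x+(\beta-\gamma)(1+\delta)x_0$, the same key inequality $f(z)\leq 1+\beta\delta$ forcing $\alpha+\gamma<r/\delta$, and the same triangle-inequality bookkeeping yielding $3r(1+\delta)/\delta$. The only difference is that you explicitly handle the closed versus open convex hull via approximation, a minor technicality the paper leaves implicit.
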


Now given $\varepsilon>0$ consider
$$C_\varepsilon:=B_X+\varepsilon B_\delta.$$
$C_\varepsilon$ defines an equivalent norm by the Minkowski functional associated to $C_\varepsilon$, i.e.,
$$\vert x\vert_\varepsilon:=\inf\{r>0: x\in r C_\varepsilon\}.$$
The chain of inclusions $B_X\subseteq C_\varepsilon\subseteq B_X+\varepsilon(1+\delta)B_X=(1+\varepsilon(1+\delta))B_X$ implies
$$\frac{\Vert x\Vert}{1+\varepsilon(1+\delta)}\leq \vert x\vert_\varepsilon\leq \Vert x\Vert.$$
Let us prove that $(X,\vert\cdot\vert_\varepsilon)$ is the desired renorming described in Theorem \ref{theo:counternegadauga}. Let us start by proving that $(X,\vert \cdot\vert_\varepsilon)$ fails the r-BSP. In order to do so, let us start by the following observation:
$$\sup_{z\in C_\varepsilon} f(z)=1+\varepsilon(1+\delta).$$
Indeed, given $z\in C_\varepsilon$ we can find $x\in B_X$ and $y\in B_\delta$ such that $z=x+\varepsilon y$. Now
$$f(z)=f(x)+\varepsilon f(y)\leq 1+\varepsilon\sup_{v\in B_\delta} f(v)=1+\varepsilon(1+\delta).$$
On the other hand $(1+\varepsilon(1+\delta))x_0=x_0+\varepsilon((1+\delta) x_0)\in B_X+\varepsilon B_\delta=C_\varepsilon$ and $f((1+\varepsilon(1+\delta))x_0)=1+\varepsilon(1+\delta)$.

Now we have the following result.

\begin{proposition}\label{prop:slicerenormingduaganot2}
Given $0<r<\min\{\varepsilon,\delta\}$, the following inclusion holds
$$\{z\in B_{(X,\vert\cdot\vert_\varepsilon)}: f(z)>1+\varepsilon(1+\delta)-r\}\subseteq B\left(\varepsilon(1+\delta)x_0, \frac{1}{1+\varepsilon}+\frac{3r(1+\delta)}{\delta}\right).$$
In particular, $(X,\vert\cdot\vert_\varepsilon)$ fails the r-BSP and, consequently, the unit ball contain slices of diameter strictly smaller than 2.
\end{proposition}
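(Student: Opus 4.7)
I would start from an arbitrary point $z$ in the slice, so $|z|_\varepsilon\le 1$ and $f(z)>1+\varepsilon(1+\delta)-r$. The unit ball of $|\cdot|_\varepsilon$ is precisely $C_\varepsilon=B_X+\varepsilon B_\delta$, so I can fix a decomposition $z=x+\varepsilon y$ with $x\in B_X$ and $y\in B_\delta$. Combining $f(x)\le 1$ with the lower bound on $f(z)$ gives $\varepsilon f(y)>\varepsilon(1+\delta)-r$, that is $f(y)>1+\delta-r/\varepsilon$. Since $r<\min\{\varepsilon,\delta\}$ makes $r/\varepsilon$ small enough to fit into the range where Proposition~\ref{prop:diamslicesrenormingdauga} delivers a non-trivial estimate (and since $(1+\delta)x_0$ attains the supremum $1+\delta$ of $f$ on $B_\delta$, it also lies in that slice of $B_\delta$), I obtain
\[
\|y-(1+\delta)x_0\|\le \tfrac{3r(1+\delta)}{\varepsilon\delta},
\]
and hence $\|\varepsilon(y-(1+\delta)x_0)\|\le 3r(1+\delta)/\delta$.

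\textbf{The norm estimate.} Next I would rewrite $z-\varepsilon(1+\delta)x_0=x+\varepsilon(y-(1+\delta)x_0)$ and estimate the two summands separately in $|\cdot|_\varepsilon$. For the second summand the trivial inequality $|\cdot|_\varepsilon\le\|\cdot\|$ already produces the bound $3r(1+\delta)/\delta$. The crucial step is the bound for the first summand: for any $x\in B_X$, the identity
\[
x=\tfrac{1}{1+\varepsilon}\,x+\tfrac{\varepsilon}{1+\varepsilon}\,x
\]
together with the inclusion $B_X\subseteq B_\delta$ exhibits $x$ as an element of $\tfrac{1}{1+\varepsilon}(B_X+\varepsilon B_\delta)=\tfrac{1}{1+\varepsilon}C_\varepsilon$, proving $|x|_\varepsilon\le 1/(1+\varepsilon)$. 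The triangle inequality in $|\cdot|_\varepsilon$ then delivers exactly the stated inclusion.

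\textbf{Failure of the r-BSP.} After normalising $f$ so that it has $|\cdot|_\varepsilon$-dual norm equal to $1$, the set in the hypothesis becomes a genuine slice of $B_{(X,|\cdot|_\varepsilon)}$, which is non-empty because $(1+\varepsilon(1+\delta))x_0$ realises the supremum of $f$ on $C_\varepsilon$. Choosing $r>0$ small enough that $\tfrac{1}{1+\varepsilon}+\tfrac{3r(1+\delta)}{\delta}<1$, the inclusion established above exhibits a slice contained in a ball of $|\cdot|_\varepsilon$-radius strictly less than $1$. This witnesses the failure of the r-BSP for $(X,|\cdot|_\varepsilon)$, and since $\diam(S)\le 2\,r(S)$, the same slice has diameter strictly smaller than $2$.

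\textbf{Where the difficulty lies.} Everything is triangle-inequality manipulation and direct application of Proposition~\ref{prop:diamslicesrenormingdauga}; the only non-routine ingredient is the sharp estimate $|x|_\varepsilon\le 1/(1+\varepsilon)$ for $x\in B_X$. Without this improvement over the trivial $|x|_\varepsilon\le 1$, the overall bound would not fall below radius $1$ and the r-BSP conclusion would collapse.
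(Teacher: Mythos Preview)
Your proof is correct and follows essentially the same route as the paper's own argument: decompose $z=x+\varepsilon y$, push the slice condition onto $y$ to invoke Proposition~\ref{prop:diamslicesrenormingdauga}, and use the key observation $(1+\varepsilon)x\in C_\varepsilon$ to get $|x|_\varepsilon\le 1/(1+\varepsilon)$. Your handling of the ``in particular'' part (noting that $(1+\delta)x_0$ lies in the relevant slice of $B_\delta$, normalising $f$, and deducing the diameter bound via $\diam(S)\le 2\,r(S)$) is slightly more explicit than the paper's, but the substance is identical.
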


\begin{proof}
Let $0<r<\min\{\varepsilon,\delta\}$ and take $z\in B_{(X,\vert\cdot\vert_\varepsilon)}=C_\varepsilon$ with $f(z)>1+\varepsilon(1+\delta)-r$. Hence, we can write $z=x+\varepsilon y$. Now
$$1+\varepsilon(1+\delta)-r<f(z)=f(x)+\varepsilon f(y)\leq 1+\varepsilon f(y),$$
from where $f(y)>1+\delta-\frac{r}{\varepsilon}$. Now Proposition \ref{prop:diamslicesrenormingdauga} applies to get that $\Vert y-(1+\delta)x_0\Vert\leq \frac{3\frac{r}{\varepsilon}(1+\delta)}{\delta}=r \frac{3(1+\delta)}{\delta\varepsilon}$. Hence
\[\begin{split}\vert z-\varepsilon(1+\delta)x_0\vert_\varepsilon\leq \vert x\vert_\varepsilon+\varepsilon \vert y-(1+\delta)x_0\vert_\varepsilon& \leq \vert x\vert _\varepsilon+\varepsilon \Vert y-(1+\delta)x_0\Vert\\
& \leq \vert x\vert_\varepsilon+ r \frac{3(1+\delta)}{\delta}.
\end{split}\]
Now it is time to estimate $\vert x\vert_\varepsilon$. In order to do so observe that $x\in B_X$ clearly. Hence
$$(1+\varepsilon)x=x+\varepsilon x\in B_X+\varepsilon B_X\subseteq B_X+\varepsilon B_\delta=C_\varepsilon.$$
This implies that $x\in \frac{1}{1+\varepsilon}C_\varepsilon$. Since $\vert x\vert_\varepsilon=\inf\{r>0: x\in r C_\varepsilon\}$ it is plain that $\vert x\vert_\varepsilon\leq \frac{1}{1+\varepsilon}$. Putting all together we get
$$\vert z-\varepsilon(1+\delta)x_0\vert_\varepsilon\leq \frac{1}{1+\varepsilon}+r\frac{3(1+\delta)}{\delta}.$$
From the arbitrariness of $z\in C_\varepsilon$ together with $f(z)>1+\varepsilon(1+\delta)-r$ we conclude that
$$\{z\in B_{(X,\vert\cdot\vert_\varepsilon)}: f(z)>1+\varepsilon(1+\delta)-r\}\subseteq B\left(\varepsilon(1+\delta)x_0, \frac{1}{1+\varepsilon}+\frac{3r(1+\delta)}{\delta}\right).$$
Hence, taking $r$ small enough, $X$ fails the r-BSP. Moreover, the above inclusion yields 
$$\diam(\{z\in B_{(X,\vert\cdot\vert_\varepsilon)}: f(z)>1+\varepsilon(1+\delta)-r\})\leq \frac{2}{1+\varepsilon}+\frac{6r(1+\delta)}{\delta}.$$
Taking infimum on $r$ the theorem is finished.\end{proof}

Now it is time to prove that $(X,\vert\cdot\vert_\varepsilon)$ has big index of thickness.

\begin{proposition}\label{prop:ccindexrenorming}
Given $x\in S_{(X,\vert\cdot\vert_\varepsilon)}$, $\eta>0$ and a convex combination of slices $C$ of $B_{(X,\vert\cdot\vert_\varepsilon)}$ there exists $y\in C$ such that 
$$\vert x-y\vert_\varepsilon\geq \frac{2-3\varepsilon(1+\delta)-\eta}{1+\varepsilon(1+\delta)}.$$
In particular, $T^{cc}((X,\vert\cdot\vert_\varepsilon)\geq \frac{2-3\varepsilon(1+\delta)}{1+\varepsilon(1+\delta)}$.
\end{proposition}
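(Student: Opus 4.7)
The plan is to transfer the convex-combination characterisation of the Daugavet property for $(X,\|\cdot\|)$ to the renorming $|\cdot|_\varepsilon$ via the explicit description $C_\varepsilon = B_X + \varepsilon B_\delta$ of the new unit ball. Write the convex combination as $C = \sum_{i=1}^n \lambda_i S_i$, where each slice of $B_{(X,|\cdot|_\varepsilon)} = C_\varepsilon$ has the form $S_i = \{z \in C_\varepsilon : g_i(z) > M_i - \alpha_i\}$ with $g_i \in X^*\setminus\{0\}$, $\alpha_i > 0$, and $M_i = \sup_{z\in C_\varepsilon} g_i(z) = \|g_i\| + \varepsilon \sup_{v\in B_\delta} g_i(v)$ (the last equality by the Minkowski-sum structure of $C_\varepsilon$).

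The key lifting step: for each $i$ fix any $v_i \in B_\delta$ with $g_i(v_i) > \sup_{B_\delta}g_i - \alpha_i/(2\varepsilon)$, and define the auxiliary slice of the original unit ball
\begin{equation*}
T_i := \{u \in B_X : g_i(u) > \|g_i\| - \alpha_i/2\}.
\end{equation*}
A direct check shows $u + \varepsilon v_i \in S_i$ for every $u \in T_i$, since $g_i(u+\varepsilon v_i) > (\|g_i\|-\alpha_i/2) + \varepsilon(\sup_{B_\delta}g_i - \alpha_i/(2\varepsilon)) = M_i - \alpha_i$. In other words, any convex combination of elements of the $T_i$ lifts, by adding the fixed perturbation $\varepsilon \sum_i \lambda_i v_i$, into~$C$.

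Now set $\tilde x := x/\|x\| \in S_X$, and observe that the sandwich $|\cdot|_\varepsilon \leq \|\cdot\| \leq (1+\varepsilon(1+\delta))|\cdot|_\varepsilon$ combined with $|x|_\varepsilon = 1$ gives $\|x-\tilde x\| = \|x\|-1 \leq \varepsilon(1+\delta)$. Applying the convex-combination characterisation of the Daugavet property from \cite[Lemma 3]{shv} to $(X,\|\cdot\|)$, with test point $\tilde x$ and the combination $\sum_i\lambda_i T_i$ of slices of $B_X$, produces $u_i \in T_i$ such that $u := \sum_i \lambda_i u_i$ satisfies $\|u-\tilde x\| > 2-\eta$. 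Set $y := \sum_i\lambda_i(u_i+\varepsilon v_i) = u + \varepsilon w$ with $w := \sum_i\lambda_i v_i \in B_\delta$; then $y\in C$ by the lifting step, and $\|w\| \leq 1+\delta$ since $B_\delta \subseteq (1+\delta)B_X$. The triangle inequality yields
\begin{equation*}
\|x-y\| \;\geq\; \|\tilde x - u\| - \|x-\tilde x\| - \varepsilon\|w\| \;>\; (2-\eta) - \varepsilon(1+\delta) - \varepsilon(1+\delta),
\end{equation*}
and dividing through by $1+\varepsilon(1+\delta)$ via $|\cdot|_\varepsilon \geq \|\cdot\|/(1+\varepsilon(1+\delta))$ delivers the stated inequality (in fact with a marginally sharper constant than claimed). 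The ``in particular'' assertion on $\mathcal T^{cc}((X,|\cdot|_\varepsilon))$ follows by letting $\eta \to 0$.

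The main delicate point is the lifting step: slices of the Minkowski-sum body $C_\varepsilon = B_X+\varepsilon B_\delta$ do not split cleanly as products of slices of $B_X$ and $B_\delta$, so one must carefully redistribute the tolerance $\alpha_i$ between the two summands to guarantee that a single fixed choice of $v_i$ works uniformly for every $u \in T_i$. Once this bookkeeping is in place, the remainder of the argument is a routine combination of the Daugavet characterisation for the original norm with the norm-equivalence bounds built into $|\cdot|_\varepsilon$.
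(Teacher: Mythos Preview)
Your proof is correct and follows essentially the same route as the paper: lift slices of $C_\varepsilon=B_X+\varepsilon B_\delta$ to slices of $B_X$ via fixed perturbations $\varepsilon v_i\in\varepsilon B_\delta$, apply the Daugavet characterisation in the original norm, and translate back through the norm equivalence. The only differences are cosmetic---you pick each $v_i$ near the $B_\delta$-supremum of $g_i$ rather than extracting it from an initial point of $C$, and you normalise $x$ directly instead of decomposing it as $u+\varepsilon v$; the latter tweak is precisely what buys you the sharper numerator $2-2\varepsilon(1+\delta)-\eta$ in place of the paper's $2-3\varepsilon(1+\delta)-\eta$.
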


\begin{proof}
Write $C=\sum_{i=1}^n \lambda_i S(B_{(X,\vert\cdot\vert_\varepsilon)},f_i,\alpha_i)$ and select $\sum_{i=1}^n \lambda_i x_i\in C$ and $x\in S_{(X,\vert\cdot\vert_\varepsilon)}$. Now, for $1\leq i\leq n$, write $x_i=u_i+\varepsilon v_i$ with $u_i\in B_X$ and $v_i\in B_\delta$. Similarly, write $x=u+\varepsilon v$ with $u\in B_X$ and $v\in B_\delta$. Observe that
$$1=\vert x\vert_\varepsilon\leq \vert u\vert_\varepsilon+\varepsilon\vert v\vert_\varepsilon\leq \Vert x\Vert+\varepsilon(1+\delta)$$
since $v\in B_\delta\subseteq (1+\delta)B_X$. This implies $\Vert u\Vert>1-\varepsilon(1+\delta)$. 

On the other hand, given $1\leq i\leq n$, we have 
$$1-\alpha_i<f(x_i)=f(u_i)+\varepsilon f(v_i)\Rightarrow f(u_i)>1-\alpha_i-\varepsilon f_i(v_i).$$
Given $T_i:=\{z\in B_X: f_i(z)>1-\alpha_i-\varepsilon f_i(v_i)\}$, we get that $T_i$ is non-empty, and thus defines a slice of $B_X$.  Write $D:=\sum_{i=1}^n \lambda_i T_i$, which is a convex combination of slices of $B_X$. Let $\eta>0$. Since $X$ has the Daugavet property we can find, by the proof of \cite[Lemma 3]{shv}, elements $\hat u_i\in T_i$ for every $1\leq i\leq n$ such that
$$\left\Vert u-\sum_{i=1}^n \lambda_i \hat u_i\right\Vert>2-\varepsilon(1+\delta)-\eta.$$
Given $1\leq i\leq n$, since $\hat u_i\in T_i$ then $f_i(\hat u_i+\varepsilon v_i)>1-\alpha_i$. Since $\hat u_i+\varepsilon v_i\in B_X+\varepsilon B_\delta=C_\varepsilon$ we infer that $\sum_{i=1}^n \lambda_i (\hat u_i+\varepsilon v_i)\in C$. Now
\[
\begin{split}
\left\vert x-\sum_{i=1}^n \lambda_i (\hat u_i+\varepsilon v_i) \right\vert_\varepsilon& \geq \frac{\left\Vert x-\sum_{i=1}^n \lambda_i (\hat u_i+\varepsilon v_i) \right\Vert}{1+\varepsilon(1+\delta)}\\
& \geq \frac{\Vert u-\sum_{i=1}^n \lambda_i \hat u_i\Vert-\varepsilon\Vert  v-\sum_{i=1}^n \lambda_i v_i\Vert}{1+\varepsilon(1+\delta)} \\
& \geq \frac{2-\varepsilon(1+\delta)-\eta-2\varepsilon(1+\delta)}{1+\varepsilon(1+\delta)},
\end{split}
\]
where we have used that $v, \sum_{i=1}^n \lambda_i v_i\in B_\delta\subseteq (1+\delta)B_X$. This completes the proof. 
\end{proof}

\begin{proof}[Proof of Theorem \ref{theo:counternegadauga}]The proof follows by the construction of this section and the arbitrariness of $\delta$ and $\varepsilon>0$.\end{proof}

\begin{remark}\label{remark:solutionhllnr}
In \cite[pp. 18, Question (b)]{hllnr} it is asked whether $\mathcal T(X)\geq 1$ implies that every non-empty relatively weakly open subset of $B_X$ has diameter $2$. Theorem \ref{theo:counternegadauga} gives a negative answer in an extreme way: Given $\varepsilon>0$ there exists a Banach space $X$ with $\mathcal T(X)>2-\varepsilon$ and $X$ still contains slices of diameter strictly smaller than 2.   
\end{remark}

\begin{remark}\label{remark:thicknessyrBSP}
In \cite[pp. 18]{hllnr} it is pointed out that if $X$ has the r-BSP then $\mathcal T^s(X)\geq 1$. Proposition \ref{prop:slicerenormingduaganot2} proves that the converse does not hold. We find that, for every $\varepsilon>0$ there exists a Banach space $X$ with $\mathcal T^s(X)>2-\varepsilon$ but $X$ fails the r-BSP.
\end{remark}

%\section{Other question}

%It is asked whether there exists a Banach space $X$ such that $0<\mathcal T(X)<\mathcal T^s(X)<2$ and $0<\mathcal T^{cc}(X)<\mathcal T(X)<2$. Corresponding examples are given in the following examples.

%\begin{example}  Let $X$ be a Banach space containing $c_0$. Up to considering the equivalent renorming of (??) we can assume that $X$ has the slice-D2P but the unit ball contains non-empty relatively weakly open subsets of arbitrarily small diameter. 

%On the other hand, let $0<r<\frac{1}{2}$ and consider by (the proof of) \cite[Proposition 2.8]{hllnr} a Banach space $Y$ such that every convex combination of slices of $B_Y$ has diameter at least $2r$ but such that, for every $\varepsilon>0$, there exists a slice of diameter strictly smaller than $2r+\varepsilon$.

%Finally set $Z:=X\oplus_\infty Y$. It follows that $Z$ has the slice-D2P (REF???) and, consequently, $\mathcal T^s(X)\geq 1$. On the other hand, given $\varepsilon>0$ such that $2r+\varepsilon<1$, the unit ball of $Z$ has a non-emtpy relatively weakly open subset of diameter smaller than or equal to $2r+\varepsilon<1$, which implies $\mathcal T(X)\leq 2r+\varepsilon<1$. Finally, observe that every non-empty relatively weakly open subset of $B_Z$ has diameter at least $2r$, from where $\mathcal T(Z)=2r<1\leq \mathcal T^s(Z)$. Moreover, $\mathcal T^s(Z)<2$ since $Z$ fails the Daugavet property, $Z$ satisfies our requirements.
%\end{example}

\section*{Acknowledgements}  

The author wants to thank an anonymous referee for suggestions that improved the exposition. This work was supported by MCIN/AEI/10.13039/501100011033: Grant PID2021-122126NB-C31, Junta de Andaluc\'ia: Grants FQM-0185 and PY20\_00255, by Fundaci\'on S\'eneca: ACyT Regi\'on de Murcia grant 21955/PI/22 and by Generalitat Valenciana project CIGE/2022/97.

\end{document}